\newtheorem{theorem}{Theorem}[section]
\theoremstyle{definition}
\newtheorem{definition}[theorem]{Definition}
\newtheorem{proposition}[theorem]{Proposition}
\theoremstyle{remark}
\numberwithin{equation}{section}
\newcommand{\D}{\mathcal{D}}
\newcommand{\C}{\mathbb{C}}
\newcommand{\B}{\mathcal{B}}
\newcommand{\oqu}{\overline{q}}
\newcommand{\quat}{\mathbb{H}}
\begin{document}
\title[Frame Perturbation]{Perturbation of Continuous Frames  on  Quaternionic Hilbert Spaces}
\author[M. Khokulan, K. Thirulogasanthar]{M. Khokulan$^1$, K. Thirulogasanthar$^2$.}
\address{$^{1}$ Department of Mathematics and Statistics, University of Jaffna, Thirunelveli, Jaffna, Srilanka. }
\address{$^{2}$ Department of Computer Science and Software Engineering, Concordia University, 1455 de Maisonneuve Blvd. West, Montreal, Quebec, H3G 1M8, Canada.}
\email{mkhokulan@gmail.com, santhar@gmail.com.}
\subjclass{Primary 42C40, 42C15}
\date{\today}
\begin{abstract}
In this note, following the theory of discrete frame perturbations in a complex Hilbert space, we examine perturbation of rank $n$ continuous frame, rank $n$ continuous Bessel family and rank $n$ continuous Riesz family in a non-commutative setting, namely in a right quaternionic Hilbert space.
\end{abstract}
\keywords{Quaternions, Quaternion Hilbert spaces, Frames, Frame perturbation, Bessel family.}
\maketitle
\pagestyle{myheadings}
\section{Introduction}
Frame is a spanning set of vectors which was introduced by Duffin and Schaeffer in 1952 in the study of non-harmonic Fourier series \cite{DU} and then in 1986 a landmark development was given by Daubechies et al. in \cite{D1,D2}.  Since then the frame theory had been widely studied by several authors \cite{D2, GR, Ole}. The study of frames has attracted interest in recent years because of their applications in several areas of Engineering, Applied Mathematics and Mathematical Physics. Many applications of frames have arisen in recent years, for example, Internet coding \cite{STR}, sampling \cite{Eld}, filter bank theory \cite{Bol}, system modeling \cite{Dud}, digital signal processing \cite{GR,Ole} and many more.\\

Perturbation theory plays a significant role in several areas of mathematics. Frame perturbations were first explicitly introduced
by Chris Heil in his Ph.D. thesis \cite{Hei}, and then widely studied by other authors \cite{Chen, Pgc,Ole1,Ole2,Zal}. As far as we know, these perturbation results have not been extended even to the complex continuous frames. In this paper we investigate  certain perturbations  of rank $n$ continuous frames in a right quaternionic Hilbert space, which was introduced in \cite{Kho}, along the lines of the arguments given in  \cite{Chen,Ole3}, where  frame perturbations were studied for complex discrete frames. \\

This article is organized as follows. In section 2, we collect some basic notations and preliminary results about quaternions and frames as needed for the development of the results obtained in this article. In section 3, we present the main results of this article, that is,  perturbations of rank $n$ continuous  frames, rank $n$ continuous Bessel family and rank $n$ continuous Riesz family in right quaternionic Hilbert spaces following their discrete counterparts studied in complex Hilbert spaces. 
\section{Mathematical preliminaries}
We recall few facts about quaternions, quaternionic Hilbert spaces and quaternionic operator properties which may not be very familiar to the reader. For more details on quaternions and quaternionic Hilbert spaces we refer the reader to \cite{Ad}. 
\subsection{Quaternions}
Let $\mathbb{H}$ denote the field of quaternions. Its elements are of the form $q=x_0+x_1i+x_2j+x_3k,~$ where $x_0,x_1,x_2$ and $x_3$ are real numbers, and $i,j,k$ are imaginary units such that $i^2=j^2=k^2=-1$, $ij=-ji=k$, $jk=-kj=i$ and $ki=-ik=j$. The quaternionic conjugate of $q$ is defined to be $\overline{q} = x_0 - x_1i - x_2j - x_3k$. Quaternions do not commute in general. However $q$ and $\oqu$ commute, and quaternions commute with real numbers. $|q|^2=q\oqu=\oqu q$ and $\overline{qp}=\overline{p}~\oqu.$ The quaternion field is measurable and we take a measure $d\mu$ on it. For instant $d\mu$ can be taken as a Radon measure or $d\mu=d\lambda d\omega$, where $d\lambda$ is a Lebesgue measure on $\C$ and $d\omega$  is a Harr measure on $SU(2)$. For details we refer the reader to, for example, \cite{Thi1} (page 12).

\subsection{Right Quaternionic Hilbert Space}
Let $V_{\mathbb{H}}^{R}$ be a linear vector space under right multiplication by quaternionic scalars (again $\mathbb{H}$ standing for the field of quaternions).  For $\phi ,\psi ,\omega\in V_{\mathbb{H}}^{R}$ and $q\in \mathbb{H}$, the inner product
$$\langle\cdot\mid\cdot\rangle:V_{\mathbb{H}}^{R}\times V_{\mathbb{H}}^{R}\longrightarrow \mathbb{H}$$
satisfies the following properties
\begin{enumerate}
	\item[(i)]
	$\overline{\langle\phi \mid \psi \rangle}=\langle \psi \mid\phi \rangle$
	\item[(ii)]
	$\|\phi\|^{2}=\langle\phi \mid\phi \rangle>0$ unless $\phi =0$, a real norm
	\item[(iii)]
	$\langle\phi \mid \psi +\omega\rangle=\langle\phi \mid \psi \rangle+\langle\phi \mid \omega\rangle$
	\item[(iv)]
	$\langle\phi \mid \psi q\rangle=\langle\phi \mid \psi \rangle q$
	\item[(v)]
	$\langle\phi q\mid \psi \rangle=\overline{q}\langle\phi \mid \psi \rangle$
\end{enumerate}
where $\overline{q}$ stands for the quaternionic conjugate. We assume that the
space $V_{\mathbb{H}}^{R}$ is complete under the norm given above. Then,  together with $\langle\cdot\mid\cdot\rangle$ this defines a right quaternionic Hilbert space, which we shall assume to be separable. Quaternionic Hilbert spaces share most of the standard properties of complex Hilbert spaces. In particular, the Cauchy-Schwartz inequality holds on quaternionic Hilbert spaces as well as the Riesz representation theorem for their duals.  
Let $\D(A)$ denote the domain of $A$. $A$ is said to be right linear if
$$A(\phi q+\psi p)=(A\phi )q+(A\psi )p;\quad\forall\phi ,\psi \in\D(A), q, p\in \mathbb{H}.$$
The set of all right linear operators will be denoted by $\mathcal{L}(V_\mathbb{H}^R)$. 
We call an operator $A\in \mathcal{L}(V_\mathbb{H}^R)$ bounded if
\begin{equation*}
\|A\|=\sup_{\|\phi \|=1}\|A\phi \|<\infty.
\end{equation*}
or equivalently, there exists $K\geq 0$ such that $\|A\phi \|\leq K\|\phi \|$ for $\phi \in\D(A)$. The set of all bounded right linear operators will be denoted by $\B(V_\mathbb{H}^R)$.
\begin{proposition}\cite{KU}
Let $A\in \B(V_\mathbb{H}^R)$ and suppose that $\left\|A\right\|< 1.$ Then $(I_{V_\mathbb{H}^R}-A)^{-1}$ exists.
\end{proposition}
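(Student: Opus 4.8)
The plan is to prove the proposition by constructing the inverse explicitly as a Neumann series, exactly as one does in the complex Banach-algebra setting, and then checking that each step survives the passage to the non-commutative quaternionic framework. First I would form the formal series $B=\sum_{n=0}^{\infty}A^{n}$, where $A^{0}=\IV$ and $A^{n}$ denotes the $n$-fold composition of $A$ with itself. Since the composition of right linear operators is again right linear, each partial sum $B_{N}=\sum_{n=0}^{N}A^{n}$ lies in $\B(V_\mathbb{H}^R)$.

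The key analytic step is convergence. I would first establish submultiplicativity of the operator norm, namely $\|A^{n}\|\le\|A\|^{n}$: for right linear $A,C$ one has $\|AC\phi\|=\|A(C\phi)\|\le\|A\|\,\|C\phi\|\le\|A\|\,\|C\|\,\|\phi\|$, so $\|AC\|\le\|A\|\,\|C\|$, and the bound on $\|A^{n}\|$ follows by induction. Because $\|A\|<1$, the real geometric series $\sum_{n=0}^{\infty}\|A\|^{n}$ converges, so for $M>N$ one has $\|B_{M}-B_{N}\|\le\sum_{n=N+1}^{M}\|A\|^{n}\to 0$; hence $(B_{N})$ is Cauchy in $\B(V_\mathbb{H}^R)$. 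Invoking completeness of the space of bounded right linear operators, $B_{N}\to B$ for some $B\in\B(V_\mathbb{H}^R)$.

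Finally I would verify that $B$ is a two-sided inverse of $\IV-A$ by telescoping. A direct computation gives $(\IV-A)B_{N}=\IV-A^{N+1}=B_{N}(\IV-A)$, and since $\|A^{N+1}\|\le\|A\|^{N+1}\to 0$ we have $A^{N+1}\to 0$ in norm. Passing to the limit, using the continuity of left and right composition by the fixed bounded operator $\IV-A$, yields $(\IV-A)B=\IV=B(\IV-A)$, so $B=(\IV-A)^{-1}$.

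The main obstacle, and the only place where the quaternionic setting demands care, is justifying the two facts I borrowed from the complex theory: that $\B(V_\mathbb{H}^R)$ is complete in the operator norm, and that composition is jointly continuous for that norm. Both hold here because the underlying space $V_\mathbb{H}^R$ is complete and the operator norm is real valued, so the usual $\varepsilon$-estimates go through verbatim; non-commutativity of $\mathbb{H}$ never enters, since scalars act on the right while the operators are right linear, leaving the norm inequalities unaffected. Once these structural facts are in hand, the argument is identical to the classical one.
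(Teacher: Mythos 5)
Your proof is correct: the Neumann series argument goes through exactly as you describe, and you have identified and justified the only two points (completeness of $\B(V_\mathbb{H}^R)$ in the operator norm and submultiplicativity/continuity of composition) where the quaternionic setting could conceivably interfere. Note that the paper itself states this proposition as a citation to an external reference and supplies no proof, so there is no in-paper argument to compare against; your construction of $(I_{V_\mathbb{H}^R}-A)^{-1}=\sum_{n=0}^{\infty}A^{n}$ is the standard and expected one.
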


\begin{definition}\cite{KH}\label{D3}(\textit{Discrete Frames})	A countable family of elements $\left\{f_{k}\right\}_{k=1}^{m}$ in $V^{R}_{\mathbb{H}}$ is a frame for $V^{R}_{\mathbb{H}}$ if there exist constants $A,B>0$ such that
	\begin{equation}
	A\left\|f\right\|^{2}\leq\displaystyle\sum_{k=1}^{m}\left|\left\langle f|f_{k}\right\rangle\right|^{2}\leq B\left\|f\right\|^{2},
	\end{equation}
	$~\text{for all}~ f\in V^{R}_{\mathbb{H}}.$
\end{definition} 
\noindent Let $\left\{f_{k}\right\}^{m}_{k=1}$ be a frame in $V^{R}_{\mathbb{H}}$  and define a linear mapping $T:\mathbb{H}^{m}\longrightarrow V^{R}_{\mathbb{H}},~$ by
\begin{equation}\label{eq15}
T\left\{c_{k}\right\}^{m}_{k=1}=\displaystyle\sum_{k=1}^{m} f_{k}c_{k},~c_{k}\in \mathbb{H}.
\end{equation}
$T$ is usually called the \textit{pre-frame operator}, or the \textit{synthesis operator.}
The adjoint operator $T^{\dagger}:V^{R}_{\mathbb{H}}\longrightarrow \mathbb{H}^{m},$ given by
\begin{equation}\label{eq16}
T^{\dagger}f=\left\{\left\langle f|f_{k}\right\rangle\right\}^{m}_{k=1}	
\end{equation}
is called the \textit{analysis operator.}
By composing $T$ with its adjoint we obtain the \textit{frame operator} $S:V^{R}_{\mathbb{H}}\longrightarrow V^{R}_{\mathbb{H}},$ by
\begin{equation}\label{eq17}
~Sf=TT^{\dagger}f=\displaystyle\sum_{k=1}^{m} f_{k}\left\langle f|f_{k}\right\rangle	.
\end{equation}
\begin{theorem}\label{T2}\cite{Kho}
For each $q\in \mathbb{H},$ let the set $\{\eta_{q}^{i}: i=1,2,\cdots,n\}$ be  linearly independent in $V_{\mathbb{H}}^{R}.$ We define an operator $A$ by 
\begin{equation}\label{E3}
\sum_{i=1}^{n}\int_{\mathbb{H}}\left|\eta_{q}^{i}\right\rangle\left\langle \eta_{q}^{i}\right|d\mu(q)=A
\end{equation}
and we always assume that $A\in GL( V_{\mathbb{H}}^{R})$, where
 $$GL(V_{\mathbb{H}}^{R})=\left\{A:V_{\mathbb{H}}^{R}\longrightarrow V_{\mathbb{H}}^{R}: A\mbox{~bounded and~} A^{-1}\mbox{~bounded}\right\}.$$ Then the operator $A$ is positive and self adjoint.
\end{theorem}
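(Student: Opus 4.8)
The plan is to work entirely at the level of the sesquilinear form $\langle\phi\mid A\psi\rangle$, since the operator $A$ in \eqref{E3} is most naturally read as a weak integral: $A$ is the bounded operator determined by $A\psi=\sum_{i=1}^{n}\int_{\HQ}\eta_q^i\,\langle\eta_q^i\mid\psi\rangle\,d\mu(q)$, where each rank-one piece $\left|\eta_q^i\right\rangle\left\langle\eta_q^i\right|$ sends $\psi$ to $\eta_q^i\,\langle\eta_q^i\mid\psi\rangle$ (right scalar multiplication of $\eta_q^i$ by the quaternion $\langle\eta_q^i\mid\psi\rangle$). First I would pair this with an arbitrary $\phi$ and push the inner product through the finite sum and the integral, invoking property (iv), $\langle\phi\mid\eta q\rangle=\langle\phi\mid\eta\rangle q$, with the scalar $q=\langle\eta_q^i\mid\psi\rangle$, to obtain the scalar identity
\begin{equation*}
\langle\phi\mid A\psi\rangle=\sum_{i=1}^{n}\int_{\HQ}\langle\phi\mid\eta_q^i\rangle\,\langle\eta_q^i\mid\psi\rangle\,d\mu(q).
\end{equation*}

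For self-adjointness I would begin from $\langle A\phi\mid\psi\rangle=\overline{\langle\psi\mid A\phi\rangle}$ (property (i)), apply the scalar identity above with the roles of $\phi$ and $\psi$ interchanged, and then conjugate termwise. Using $\overline{qp}=\overline{p}\,\overline{q}$ together with property (i), one gets $\overline{\langle\psi\mid\eta_q^i\rangle\,\langle\eta_q^i\mid\phi\rangle}=\langle\phi\mid\eta_q^i\rangle\,\langle\eta_q^i\mid\psi\rangle$. Summing and integrating returns precisely $\langle\phi\mid A\psi\rangle$, so $\langle A\phi\mid\psi\rangle=\langle\phi\mid A\psi\rangle$ for all $\phi,\psi\in V_{\HQ}^{R}$, which is the statement $A=A^{\dagger}$.

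For positivity I would simply set $\psi=\phi$ in the scalar identity. By property (i) we have $\langle\eta_q^i\mid\phi\rangle=\overline{\langle\phi\mid\eta_q^i\rangle}$, so each integrand is $\langle\phi\mid\eta_q^i\rangle\,\overline{\langle\phi\mid\eta_q^i\rangle}=\bigl|\langle\phi\mid\eta_q^i\rangle\bigr|^{2}\ge 0$, whence
\begin{equation*}
\langle\phi\mid A\phi\rangle=\sum_{i=1}^{n}\int_{\HQ}\bigl|\langle\phi\mid\eta_q^i\rangle\bigr|^{2}\,d\mu(q)\ge 0 .
\end{equation*}
Thus $A$ is positive.

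The routine parts are the manipulations with quaternionic conjugates; the only point genuinely requiring care is the justification that the inner product, and quaternionic conjugation, may be interchanged with the integral over $\HQ$. The finite sum over $i$ is harmless, and since $A$ is assumed to lie in $GL(V_{\HQ}^{R})$, in particular to be bounded, the weak integral defining it converges and the interchange is legitimate under the weak-integral reading of \eqref{E3}. I would also remark that the linear independence hypothesis on $\{\eta_q^i\}$ plays no role in this argument; it is needed only for the invertibility of $A$ and the frame bounds, not for self-adjointness or positivity.
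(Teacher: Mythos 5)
Your proof is correct. The paper states this theorem as a quotation from \cite{Kho} without reproducing a proof, but your argument --- passing to the scalar identity $\langle\phi\mid A\psi\rangle=\sum_{i=1}^{n}\int_{\mathbb{H}}\langle\phi\mid\eta_q^i\rangle\langle\eta_q^i\mid\psi\rangle\,d\mu(q)$, conjugating termwise via $\overline{qp}=\overline{p}\,\overline{q}$ for self-adjointness, and reducing positivity to $\left|\langle\phi\mid\eta_q^i\rangle\right|^{2}\ge 0$ --- is exactly the method the paper itself employs for the analogous proposition on the perturbed frame operator $A^{\prime}$, including your (correct) observation that linear independence is not used.
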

\begin{definition}\cite{Kho}\textit{(Continuous frame)}\label{CFD1}
	A set of vectors $\{\eta^i_q\in V_\mathbb{H}^R~|~i=1,2,\cdots, n,~q\in \mathbb{H}\}$ constitute a rank $n$ right quaternionic continuous frame, denoted by $F(\eta^i_q, A, n)$, if
	\begin{enumerate}
		\item[(i)] for each $q\in \mathbb{H}$, the set of vectors $\{\eta^i_q\in V_\mathbb{H}^R~|~i=1,2,\cdots, n\}$ is a linearly independent set.
		\item[(ii)]there exists a positive operator $A\in GL(V_\mathbb{H}^R)$ such that
		$$\sum_{i=1}^{n}\int_{\mathbb{H}}|\eta^i_q\rangle\langle \eta^i_q|d\mu(q)=A.$$
	\end{enumerate}
\end{definition}
\begin{theorem}\cite{Kho}\label{T4}
	For $\phi\in V_\mathbb{H}^R$, we have
	\begin{equation}\label{E23}
	m(A)\|\phi\|^2\leq\sum_{i=1}^{n}\int_{\mathbb{H}}|\langle\eta_{q}^i|\phi\rangle|^2 d\mu(q)\leq M(A)\|\phi\|^2,
	\end{equation}
where $M(A)=\displaystyle\sup_{\|\phi\|=1}\langle\phi|A\phi\rangle$ and
$m(A)=\displaystyle\inf_{\|\phi\|=1}\langle\phi|A\phi\rangle.$
\end{theorem}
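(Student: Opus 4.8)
The plan is to reduce the claimed two-sided estimate to the elementary variational characterization of the extreme values of the quadratic form $\phi\mapsto\langle\phi|A\phi\rangle$ attached to the positive self-adjoint operator $A$ of Theorem \ref{T2}. The first step is to identify the integral in the middle of \eqref{E23} with this quadratic form. Since $A$ is defined by the weak integral $A=\sum_{i=1}^{n}\int_{\mathbb{H}}|\eta_q^i\rangle\langle\eta_q^i|\,d\mu(q)$, its action reads $A\phi=\sum_{i=1}^{n}\int_{\mathbb{H}}\eta_q^i\langle\eta_q^i|\phi\rangle\,d\mu(q)$. I would then pair this with $\phi$ on the left and pull the bounded right-linear functional $\langle\phi|\cdot\rangle$ through the finite sum and the integral to obtain $\langle\phi|A\phi\rangle=\sum_{i=1}^{n}\int_{\mathbb{H}}\langle\phi|\eta_q^i\langle\eta_q^i|\phi\rangle\rangle\,d\mu(q)$.

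The second step is purely algebraic and uses the inner-product axioms. By property (iv) the scalar $\langle\eta_q^i|\phi\rangle\in\mathbb{H}$ factors out on the right, giving $\langle\phi|\eta_q^i\rangle\langle\eta_q^i|\phi\rangle$; by property (i) we have $\langle\phi|\eta_q^i\rangle=\overline{\langle\eta_q^i|\phi\rangle}$, so each integrand collapses to $\overline{\langle\eta_q^i|\phi\rangle}\langle\eta_q^i|\phi\rangle=|\langle\eta_q^i|\phi\rangle|^2$. Hence $\langle\phi|A\phi\rangle=\sum_{i=1}^{n}\int_{\mathbb{H}}|\langle\eta_q^i|\phi\rangle|^2\,d\mu(q)$, which is exactly the middle expression in \eqref{E23}, and is manifestly real and nonnegative.

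For the final step I would invoke the definitions $M(A)=\sup_{\|\psi\|=1}\langle\psi|A\psi\rangle$ and $m(A)=\inf_{\|\psi\|=1}\langle\psi|A\psi\rangle$. For $\phi\neq0$, set $\psi=\phi\|\phi\|^{-1}$; because the norm is real-valued, $\|\phi\|^{-1}$ is a real scalar, hence commutes with all quaternions and is fixed by conjugation. Using properties (v) and (iv) one computes $\langle\psi|A\psi\rangle=\|\phi\|^{-2}\langle\phi|A\phi\rangle$, and since $\|\psi\|=1$ this value lies between $m(A)$ and $M(A)$. Multiplying through by $\|\phi\|^2$ yields \eqref{E23}, the case $\phi=0$ being trivial.

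The step requiring the most care is the interchange in the first paragraph, namely pulling $\langle\phi|\cdot\rangle$ through the weak integral defining $A$; this is legitimate precisely because $\langle\phi|\cdot\rangle$ is a bounded right-linear functional and the defining integral for $A$ converges in $GL(V_\mathbb{H}^R)$. The only other point demanding vigilance is the persistent non-commutativity of $\mathbb{H}$: every quaternionic scalar, in particular $\langle\eta_q^i|\phi\rangle$ and the normalizing factor $\|\phi\|^{-1}$, must be kept on the side dictated by axioms (iv) and (v), and it is the reduction to a \emph{real} scalar for the norm that makes the normalization argument go through cleanly.
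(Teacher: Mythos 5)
Your argument is correct: the identity $\langle\phi|A\phi\rangle=\sum_{i=1}^{n}\int_{\mathbb{H}}|\langle\eta_q^i|\phi\rangle|^2\,d\mu(q)$ follows from the weak-integral definition of $A$ together with axioms (i) and (iv), and the two-sided bound is then immediate from the variational definitions of $m(A)$ and $M(A)$ after normalizing by the real scalar $\|\phi\|^{-1}$. Note that the paper itself imports this theorem from \cite{Kho} without reproducing a proof, but your route is the standard one and handles the quaternionic non-commutativity correctly.
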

The inequality (\ref{E23}) presents the frame condition for the set of vectors
$$\{\eta^i_q\in V_\mathbb{H}^R~|~i=1,2,\cdots, n,~q\in \mathbb{H}\}$$
with frame bounds $m(A)$ and $M(A)$. 
\begin{theorem}\cite{Kho}(Frame decomposition)\label{FD}
Let $\{\eta^i_q\in V_\mathbb{H}^R~|~i=1,2,\cdots, n,~q\in \mathbb{H}\}$ be a rank $n$ continuous frame with bounds $m(A)$ and $M(A)$. Then for any $\phi\in V_\mathbb{H}^R, $ we have
\begin{eqnarray*}
\phi&=&\sum_{i=1}^{n}\int_\quat \eta^i_q\left\langle \phi|A^{-1}\eta^i_q\right\rangle d\mu(q)\\&=&\sum_{i=1}^{n}\int_\quat A^{-1}\eta^i_q\left\langle \phi|\eta^i_q\right\rangle d\mu(q),.
\end{eqnarray*}
where $A$ is the frame operator of  the frame $\{\eta^i_q\in V_\mathbb{H}^R~|~i=1,2,\cdots, n,~q\in \mathbb{H}\}.$
\end{theorem}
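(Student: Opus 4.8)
The plan is to prove the reconstruction formulas by exploiting the two elementary factorizations $\phi=A^{-1}(A\phi)$ and $\phi=A(A^{-1}\phi)$, both of which are available because the frame operator $A$ lies in $GL(V_\mathbb{H}^R)$ and is therefore boundedly invertible. All the content of the statement then comes from pushing the operators $A$ and $A^{-1}$ through the (weakly convergent) operator-valued integral that defines $A$ in Definition \ref{CFD1}, using only boundedness, right-linearity, self-adjointness, and the inner-product axioms (i)--(v).

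First I would record the properties of $A^{-1}$ that the argument needs. By Theorem \ref{T2} the operator $A$ is positive and self-adjoint, and by the standing hypothesis $A\in GL(V_\mathbb{H}^R)$, so $A^{-1}\in\B(V_\mathbb{H}^R)$ exists and is bounded. Taking adjoints in $A A^{-1}=A^{-1}A=\IV$ and using $A^{\dagger}=A$ gives $(A^{-1})^{\dagger}=(A^{\dagger})^{-1}=A^{-1}$, so $A^{-1}$ is again self-adjoint. This is the crucial fact, since it is exactly what allows me to relocate $A^{-1}$ from one slot of the inner product to the other.

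For the second identity I would start from $\phi=A^{-1}(A\phi)$ and insert the defining expression $A\phi=\sum_{i=1}^{n}\int_\quat\eta^i_q\,\langle\eta^i_q\mid\phi\rangle\,d\mu(q)$. Because $A^{-1}$ is bounded (hence continuous) and right-linear, it commutes with the scalar factor $\langle\eta^i_q\mid\phi\rangle$, which multiplies on the right by axiom (iv), and may be carried inside the integral sign; this gives
\[
\phi=A^{-1}(A\phi)=\sum_{i=1}^{n}\int_\quat A^{-1}\eta^i_q\,\langle\eta^i_q\mid\phi\rangle\,d\mu(q),
\]
and rewriting the scalar coefficient by conjugate symmetry (i) yields the second displayed line of the theorem. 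For the first identity I instead start from $\phi=A(A^{-1}\phi)$, expand $A$ applied to the vector $A^{-1}\phi$, and then use the self-adjointness of $A^{-1}$ established above to move it onto the frame vector, $\langle\eta^i_q\mid A^{-1}\phi\rangle=\langle A^{-1}\eta^i_q\mid\phi\rangle$, which after (i) is the coefficient $\langle\phi\mid A^{-1}\eta^i_q\rangle$ appearing in the first line.

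The only genuine subtlety, and the step I would treat most carefully, is the interchange of the bounded operator $A^{-1}$ with the operator-valued integral $\sum_{i}\int_\quat|\eta^i_q\rangle\langle\eta^i_q|\,d\mu(q)$. I would justify this in the weak sense: for an arbitrary $\psi\in V_\mathbb{H}^R$ the Cauchy--Schwarz inequality together with the Bessel-type bound of Theorem \ref{T4} shows that $q\mapsto\langle\psi\mid A^{-1}\eta^i_q\rangle\langle\eta^i_q\mid\phi\rangle$ is integrable, so that $\langle\psi\mid A^{-1}A\phi\rangle=\sum_{i}\int_\quat\langle\psi\mid A^{-1}\eta^i_q\rangle\langle\eta^i_q\mid\phi\rangle\,d\mu(q)$; since $\psi$ is arbitrary, the vector identity follows. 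Once this interchange is secured, everything else is bookkeeping with the quaternionic inner-product axioms, and no estimate beyond the frame inequality of Theorem \ref{T4} is required.
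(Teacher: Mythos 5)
The paper itself offers no proof of this theorem: it is quoted verbatim from \cite{Kho} as a preliminary result, so there is no in-paper argument to measure yours against. That said, your proof is the standard one and is sound in substance: factoring $\phi=A^{-1}(A\phi)$ and $\phi=A(A^{-1}\phi)$, noting $(A^{-1})^{\dagger}=A^{-1}$, and passing the bounded operator through the weakly convergent integral (justified via Cauchy--Schwarz and the Bessel bound of Theorem \ref{T4}) is exactly what is needed, and your weak-sense treatment of the interchange is the right level of care. The one point you gloss over is specific to the quaternionic setting: $\langle\eta^i_q\mid\phi\rangle$ and $\langle\phi\mid\eta^i_q\rangle$ are conjugate quaternions, not equal in general, so ``rewriting the scalar coefficient by conjugate symmetry'' does not turn $\sum_{i}\int_{\mathbb{H}}A^{-1}\eta^i_q\langle\eta^i_q\mid\phi\rangle\,d\mu(q)$ into $\sum_{i}\int_{\mathbb{H}}A^{-1}\eta^i_q\langle\phi\mid\eta^i_q\rangle\,d\mu(q)$. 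This mismatch is already latent in the paper's own conventions (the operator $\sum_{i}\int_{\mathbb{H}}|\eta^i_q\rangle\langle\eta^i_q|\,d\mu(q)$ acts with coefficient $\langle\eta^i_q\mid\phi\rangle$, yet the decomposition is displayed with $\langle\phi\mid A^{-1}\eta^i_q\rangle$), so it is an inherited notational inconsistency rather than a defect of your argument; still, a careful write-up should either keep the coefficients in the form $\langle A^{-1}\eta^i_q\mid\phi\rangle$ and $\langle\eta^i_q\mid\phi\rangle$ that your computation actually produces, or state explicitly which reading of $|\cdot\rangle\langle\cdot|$ is in force.
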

\begin{theorem}\cite{Kho}
Let $\{\eta^i_q\in V_\mathbb{H}^R~|~i=1,2,\cdots, n,~q\in \mathbb{H}\}$ be a rank $n$ continuous frame with bounds $m(A)$ and $M(A)$.Then $\{A^{-1}\eta^i_q\in V_\mathbb{H}^R~|~i=1,2,\cdots, n,~q\in \mathbb{H}\}$ is a rank $n$ continuous frame with bounds $\displaystyle\frac{1}{M(A)}$ and $\displaystyle\frac{1}{m(A)}.$
\end{theorem}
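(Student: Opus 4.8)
The plan is to verify the two defining conditions of a rank $n$ continuous frame (Definition \ref{CFD1}) for the family $\{A^{-1}\eta^i_q\}$ and then to read off its bounds from Theorem \ref{T4}. Throughout I use that $A$ is positive, self-adjoint and lies in $GL(V_\mathbb{H}^R)$, so that $A^{-1}$ is bounded with bounded inverse $(A^{-1})^{-1}=A$; in particular $A^{-1}\in GL(V_\mathbb{H}^R)$. Self-adjointness of $A^{-1}$ follows from $(A^{-1})^\dagger=(A^\dagger)^{-1}=A^{-1}$, and positivity is elementary: for $\phi\in V_\mathbb{H}^R$, putting $\psi=A^{-1}\phi$ gives $\langle\phi\mid A^{-1}\phi\rangle=\langle A\psi\mid\psi\rangle=\langle\psi\mid A\psi\rangle\geq 0$.

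First I would check condition (i), linear independence. Fix $q\in\mathbb{H}$ and suppose $\sum_{i=1}^n A^{-1}\eta^i_q c_i=0$ with $c_i\in\mathbb{H}$. By right linearity this reads $A^{-1}\big(\sum_{i=1}^n \eta^i_q c_i\big)=0$, and injectivity of $A^{-1}$ forces $\sum_{i=1}^n\eta^i_q c_i=0$; the linear independence of $\{\eta^i_q\}_{i=1}^n$ then gives $c_i=0$ for all $i$. Hence $\{A^{-1}\eta^i_q\}_{i=1}^n$ is linearly independent for each $q$.

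Next I would identify the frame operator $B=\sum_{i=1}^n\int_\mathbb{H}|A^{-1}\eta^i_q\rangle\langle A^{-1}\eta^i_q|\,d\mu(q)$ of the new family. Working weakly, for $\phi,\psi\in V_\mathbb{H}^R$ and using self-adjointness of $A^{-1}$ twice,
\begin{align*}
\langle\psi\mid B\phi\rangle
&=\sum_{i=1}^n\int_\mathbb{H}\langle\psi\mid A^{-1}\eta^i_q\rangle\langle A^{-1}\eta^i_q\mid\phi\rangle\,d\mu(q)\\
&=\sum_{i=1}^n\int_\mathbb{H}\langle A^{-1}\psi\mid\eta^i_q\rangle\langle\eta^i_q\mid A^{-1}\phi\rangle\,d\mu(q)
=\langle A^{-1}\psi\mid A(A^{-1}\phi)\rangle=\langle\psi\mid A^{-1}\phi\rangle,
\end{align*}
where the penultimate equality uses the defining relation of the frame operator $A$ in Definition \ref{CFD1}. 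As $\psi$ is arbitrary, $B=A^{-1}$. Since $A^{-1}$ is a positive operator in $GL(V_\mathbb{H}^R)$, conditions (i) and (ii) of Definition \ref{CFD1} hold, so $\{A^{-1}\eta^i_q\}$ is a rank $n$ continuous frame with frame operator $A^{-1}$.

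Finally, Theorem \ref{T4} applied to this frame gives the optimal bounds $m(A^{-1})$ and $M(A^{-1})$, so it remains to prove $m(A^{-1})=1/M(A)$ and $M(A^{-1})=1/m(A)$. From $m(A)\|\phi\|^2\leq\langle\phi\mid A\phi\rangle\leq M(A)\|\phi\|^2$ we obtain the operator inequalities $m(A)I_{V_\mathbb{H}^R}\leq A\leq M(A)I_{V_\mathbb{H}^R}$; inverting and using that inversion reverses the order on positive invertible operators yields $\frac{1}{M(A)}I_{V_\mathbb{H}^R}\leq A^{-1}\leq\frac{1}{m(A)}I_{V_\mathbb{H}^R}$, hence $m(A^{-1})\geq 1/M(A)$ and $M(A^{-1})\leq 1/m(A)$. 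Applying the same step to $A^{-1}$ (whose inverse is $A$) gives the reverse inequalities, whence equality. I expect this last point to be the main obstacle: proving that inversion is order-reversing for positive operators in the quaternionic setting rests on the existence of square roots (equivalently, the quaternionic spectral theorem) for positive self-adjoint operators on $V_\mathbb{H}^R$. Alternatively one may argue spectrally, using $M(T)=\sup\sigma(T)$ and $m(T)=\inf\sigma(T)$ for positive self-adjoint $T$ together with the spectral mapping $\sigma(A^{-1})=\{\lambda^{-1}:\lambda\in\sigma(A)\}$.
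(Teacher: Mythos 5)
Note first that the paper itself gives no proof of this statement: it is quoted verbatim from \cite{Kho} as a preliminary, so there is nothing in-paper to compare against. Judged on its own, your argument is correct and is the standard one: linear independence of $\{A^{-1}\eta^i_q\}_{i=1}^n$ follows from injectivity of $A^{-1}$, the weak computation correctly identifies the new frame operator as $B=A^{-1}$ (using self-adjointness of $A^{-1}$ and the defining relation for $A$), and Theorem \ref{T4} then reduces everything to the operator inequality $\frac{1}{M(A)}I_{V_\mathbb{H}^R}\leq A^{-1}\leq\frac{1}{m(A)}I_{V_\mathbb{H}^R}$. The one step you flag as a potential obstacle --- order-reversal of inversion on positive operators in the quaternionic setting --- does not actually require square roots or the $S$-spectrum spectral theorem, and note also that the theorem only asserts that $1/M(A)$ and $1/m(A)$ are bounds, not optimal ones, so one-sided inequalities suffice. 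Both can be obtained from the Cauchy--Schwarz inequality for the positive sesquilinear form $[u,v]:=\langle u\mid Av\rangle$, which is valid on $V_\mathbb{H}^R$: writing $\psi=A^{-1}\phi$, one gets $\|\phi\|^4=[A\psi,\psi]^2\leq[A\psi,A\psi][\psi,\psi]\leq M(A)\,\|\phi\|^2\,\langle\phi\mid A^{-1}\phi\rangle$, hence $\langle\phi\mid A^{-1}\phi\rangle\geq\|\phi\|^2/M(A)$; and from $m(A)\|\psi\|^2\leq\langle\psi\mid A\psi\rangle\leq\|\psi\|\,\|\phi\|$ one gets $\|\psi\|\leq\|\phi\|/m(A)$ and therefore $\langle\phi\mid A^{-1}\phi\rangle\leq\|\psi\|\,\|\phi\|\leq\|\phi\|^2/m(A)$. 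With that substitution your proof is complete and elementary.
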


\begin{definition}
We call a family $\{\xi^i_q\in V_\mathbb{H}^R~|~i=1,2,\cdots, n,~q\in \mathbb{H}\}$ of elements in $V_\mathbb{H}^R$  a rank $n$ continuous Bessel family if there exists
$D>0$ such that
\begin{equation}
\sum_{i=1}^{n}\int_{\mathbb{H}}\left| \left\langle \xi^i_q|\phi\right\rangle \right|^{2}d\mu(q)\leq D\left\| \phi\right\|^{2},  
\end{equation}
for all $\phi\in V_\mathbb{H}^R.$	
\end{definition}
A rank $n$ continuous Bessel family $\{\xi^i_q\in V_\mathbb{H}^R~|~i=1,2,\cdots, n,~q\in \mathbb{H}\}$ will be called a rank $n$ continuous frame if there exists $C>0$ such that
\begin{equation}
C\left\|\phi\right\|^{2}\leq\sum_{i=1}^{n}\int_{\mathbb{H}}\left| \left\langle \xi^i_q|\phi\right\rangle \right|^{2}d\mu(q),
\end{equation}
for all $\phi\in V_\mathbb{H}^R.$\\
The following result is an adaptation of the discrete case considered in \cite{SH}.
\begin{theorem}\label{SH}
	Let $\{\zeta^i_q\in V_\mathbb{H}^R~|~i=1,2,\cdots, n,~q\in \mathbb{H}\}$ be a rank $n$ continuous Bessel family of  $V_\mathbb{H}^R$ with bound $D.$  Then the mapping $T$ from $\mathbb{H}^{n}$ to $V_\mathbb{H}^R$ defined by
	\begin{equation}\label{444}
	T(\{c_{i}\}_{i=1}^{n}):=\sum_{i=1}^{n}\int_{\mathbb{H}}\zeta^i_q c_{i} d\mu(q)
	\end{equation} 
	is a right linear and bounded operator with $\left\| T\right\|\leq \sqrt{D}. $
\end{theorem}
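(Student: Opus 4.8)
The plan is to treat the two assertions separately: right linearity is a direct computation, while the norm bound $\|T\|\le\sqrt D$ is obtained by the standard duality argument that converts the Bessel (analysis) estimate into a synthesis estimate.

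First I would verify right linearity. Since the finite sum over $i$ and integration against $d\mu$ are additive, and since the coefficients sit to the right of the vectors $\zeta^i_q$, for coefficient families $\{c_i\},\{d_i\}$ and $p,r\in\mathbb{H}$ one has $T(\{c_ip+d_ir\})=\sum_{i=1}^n\int_\mathbb{H}\zeta^i_q(c_ip+d_ir)\,d\mu(q)=T(\{c_i\})p+T(\{d_i\})r$, where the only point to notice is the associativity $(\zeta^i_q c_i)p=\zeta^i_q(c_ip)$ of the right action. This step is routine.

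For the norm estimate I would use the characterization $\|v\|=\sup_{\|\phi\|=1}|\langle\phi|v\rangle|$ on $V_\mathbb{H}^R$: Cauchy--Schwarz (granted in the preliminaries) gives $\le$, and the choice $\phi=v\|v\|^{-1}$ attains it, because $\|v\|^{-1}$ is real and so, by property (v), $\langle v\|v\|^{-1}|v\rangle=\|v\|^{-1}\langle v|v\rangle=\|v\|$. Fixing $\phi$ with $\|\phi\|=1$ and writing $v=T(\{c_i\})$, I pull the pairing through the sum and the integral using properties (iii) and (iv) to obtain $\langle\phi|T(\{c_i\})\rangle=\sum_{i=1}^n\int_\mathbb{H}\langle\phi|\zeta^i_q\rangle c_i\,d\mu(q)$. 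Taking moduli and using the triangle inequality for the integral together with $|ab|=|a|\,|b|$ for quaternions gives $|\langle\phi|T(\{c_i\})\rangle|\le\sum_{i=1}^n\int_\mathbb{H}|\langle\phi|\zeta^i_q\rangle|\,|c_i|\,d\mu(q)$. The key step is then to apply the Cauchy--Schwarz inequality on the product index--measure space $\{1,\dots,n\}\times\mathbb{H}$, factoring the right-hand side as $\big(\sum_{i=1}^n\int_\mathbb{H}|\langle\phi|\zeta^i_q\rangle|^2\,d\mu(q)\big)^{1/2}\big(\sum_{i=1}^n\int_\mathbb{H}|c_i|^2\,d\mu(q)\big)^{1/2}$. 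The first factor is exactly the quantity controlled by the Bessel condition, hence $\le\sqrt D\,\|\phi\|=\sqrt D$, while the second factor is the norm of the coefficient family; taking the supremum over $\|\phi\|=1$ yields $\|T(\{c_i\})\|\le\sqrt D\,\|\{c_i\}\|$, giving both boundedness and $\|T\|\le\sqrt D$.

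I expect the only genuine obstacle to be the bookkeeping imposed by non-commutativity: one must consistently keep the $\zeta^i_q$ on the left and the $c_i$ on the right, invoke property (iv) to release $c_i$ from the right slot and property (v) (conjugate-linearity in the left slot) when establishing the norm-as-supremum identity, and confirm that $|\cdot|$ and Cauchy--Schwarz over $\mathbb{H}$ behave exactly as over $\mathbb{C}$. A secondary item to pin down is the well-definedness of the defining integral, so that $T(\{c_i\})$ indeed lies in $V_\mathbb{H}^R$; this follows from the same estimate, and one should also fix the precise norm on the coefficient space so that the second Cauchy--Schwarz factor equals $\|\{c_i\}\|$.
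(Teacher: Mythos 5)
Your argument is essentially the paper's own proof: both express $\left\| T\{c_{i}\}\right\|$ as a supremum of the pairing against unit vectors, pull the inner product through the sum and the integral, apply the triangle inequality and then Cauchy--Schwarz, and finish by invoking the Bessel bound on the factor $\big(\sum_{i}\int_{\mathbb{H}}|\langle \zeta^i_q|\phi\rangle|^2\,d\mu(q)\big)^{1/2}$. One wrinkle, shared with the paper, is worth flagging: Cauchy--Schwarz on the product space $\{1,\dots,n\}\times\mathbb{H}$ literally produces the second factor $\big(\sum_{i}\int_{\mathbb{H}}|c_{i}|^{2}\,d\mu(q)\big)^{1/2}=\mu(\mathbb{H})^{1/2}\big(\sum_{i}|c_{i}|^{2}\big)^{1/2}$, which you then identify with $\big(\sum_{i}|c_{i}|^{2}\big)^{1/2}$ just as the paper silently does, so neither write-up justifies that identification unless $\mu(\mathbb{H})=1$ (or the estimate is reorganized so that the integration is applied only to the $q$-dependent factor).
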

\begin{proof}
	It is not difficult to see that $T$ is right linear. Now for $\phi\in V_{\quat}^{R},$
	\begin{eqnarray*}
		\left\| T\{c_{i}\}\right\| &=&\sup_{\left\| \phi\right\| =1}\left|\left\langle T\{c_{i}\}|\phi\right\rangle   \right|\\
		&=&\sup_{\left\| \phi\right\| =1} \left|\left\langle \sum_{i=1}^{n}\int_{\mathbb{H}}\zeta^i_q c_{i} d\mu(q)|\phi\right\rangle   \right|\\
		&=&\sup_{\left\| \phi\right\| =1}\left|\sum_{i=1}^{n}\int_{\mathbb{H}}\overline{c_{i}}\left\langle \zeta^i_q|\phi\right\rangle d\mu(q) \right| \\
		&\leq&\sup_{\left\| \phi\right\| =1}\sum_{i=1}^{n}\int_{\mathbb{H}}\left| \overline{c_{i}}\left\langle \zeta^i_q|\phi\right\rangle\right|d\mu(q)\\
		&\leq&\sup_{\left\| \phi\right\| =1}\left(\sum_{i=1}^{n}\int_{\mathbb{H}}\left|\left\langle \zeta^i_q|\phi\right\rangle  \right|^{2} d\mu(q) \right) ^{\frac{1}{2}}\left(\sum_{i=1}^{n}\left| c_{i}\right|^{2} \right)^{\frac{1}{2}}\\
		&\leq&\sup_{\left\| \phi\right\| =1}\left(D\left\| \phi\right\|^{2} \right)^{\frac{1}{2}}\left(\sum_{i=1}^{n}\left| c_{i}\right|^{2} \right)^{\frac{1}{2}}\\
		&=&\sqrt{D}\left(\sum_{i=1}^{n}\left| c_{i}\right|^{2} \right)^{\frac{1}{2}}.
	\end{eqnarray*}	
	Hence $\left\| T\right\| \leq \sqrt{D}.$
\end{proof}
\noindent
By composing the operator $T$ in (\ref{444}) with its adjoint operator $T^{\dagger}$ we get the frame operator.
\noindent
\begin{proposition}
Let $A=\displaystyle\sum_{i=1}^{n}\int_{\mathbb{H}}|\eta^i_q\rangle\langle \zeta^i_q|d\mu(q).$ Then $A^{\dagger}=\displaystyle\sum_{i=1}^{n}\int_{\mathbb{H}}|\zeta^i_q\rangle\langle \eta^i_q|d\mu(q).$
\end{proposition}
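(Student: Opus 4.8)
The plan is to verify the defining relation $\langle A\phi \mid \psi\rangle = \langle \phi \mid A^{\dagger}\psi\rangle$ for all $\phi,\psi \in V_{\mathbb{H}}^R$, where $A^{\dagger}$ is taken to be the candidate operator $\sum_{i=1}^{n}\int_{\mathbb{H}}|\zeta^i_q\rangle\langle \eta^i_q|\,d\mu(q)$, and then to invoke uniqueness of the adjoint. The starting point is the observation that the ket-bra $|\eta^i_q\rangle\langle\zeta^i_q|$ acts as $\phi \mapsto \eta^i_q\,\langle\zeta^i_q\mid\phi\rangle$, so that $A\phi = \sum_{i=1}^{n}\int_{\mathbb{H}}\eta^i_q\,\langle\zeta^i_q\mid\phi\rangle\,d\mu(q)$, with the quaternionic scalar $\langle\zeta^i_q\mid\phi\rangle$ sitting to the right of the vector $\eta^i_q$ as dictated by the right-module structure of $V_{\mathbb{H}}^R$.

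First I would compute $\langle A\phi\mid\psi\rangle$ by pulling the sum and integral out of the first slot and treating the integrand pointwise. For fixed $i$ and $q$, the integrand is $\langle \eta^i_q\,\langle\zeta^i_q\mid\phi\rangle \mid \psi\rangle$. Since the scalar $\langle\zeta^i_q\mid\phi\rangle$ multiplies $\eta^i_q$ on the right, property (v) extracts it as a conjugate on the left, giving $\overline{\langle\zeta^i_q\mid\phi\rangle}\,\langle\eta^i_q\mid\psi\rangle$; property (i) then rewrites $\overline{\langle\zeta^i_q\mid\phi\rangle} = \langle\phi\mid\zeta^i_q\rangle$. Next I would reabsorb the remaining scalar $\langle\eta^i_q\mid\psi\rangle$: because it now sits to the right, property (iv) yields $\langle\phi\mid\zeta^i_q\rangle\,\langle\eta^i_q\mid\psi\rangle = \langle\phi\mid \zeta^i_q\,\langle\eta^i_q\mid\psi\rangle\rangle$.

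Summing over $i$, integrating over $\mathbb{H}$, and using additivity of the inner product in its second argument (property (iii)) together with the passage of the inner product through the integral, I would obtain $\langle A\phi\mid\psi\rangle = \langle\phi\mid \sum_{i=1}^{n}\int_{\mathbb{H}}\zeta^i_q\,\langle\eta^i_q\mid\psi\rangle\,d\mu(q)\rangle$, which is precisely $\langle\phi\mid A^{\dagger}\psi\rangle$. By the uniqueness of the adjoint this identifies $A^{\dagger}$ with the claimed operator.

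The computation is short, and the only point demanding genuine care is the non-commutativity of $\mathbb{H}$: one must keep each quaternionic scalar on the correct side at every step so that the right-module axioms (iv) and (v) apply as intended, since a careless commutation would corrupt the conjugation and produce the wrong operator. A secondary, routine matter is justifying the interchange of the inner product with the integral and the well-definedness of the integrals appearing here, which follows from the boundedness afforded by the Bessel/frame hypotheses, exactly as in Theorem \ref{SH}.
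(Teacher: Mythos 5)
Your proposal is correct and follows essentially the same route as the paper: both verify the defining adjoint identity by expanding the integrand and carefully tracking the quaternionic scalars, the only cosmetic difference being that you expand $\langle A\phi\mid\psi\rangle$ directly via axioms (i), (iv), (v), while the paper expands $\langle\psi\mid A^{\dagger}\phi\rangle$ and applies the conjugation rule $\overline{pq}=\overline{q}\,\overline{p}$. No substantive gap.
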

\begin{proof}
For $\psi, \phi\in V_{\mathbb{H}}^{R},~A\psi=\displaystyle\sum_{i=1}^{n}\int_{\mathbb{H}}|\eta^i_q\rangle\langle \zeta^i_q|\psi\rangle d\mu(q),$ we have  
\begin{equation}
    \langle \phi|A\psi\rangle=\displaystyle\sum_{i=1}^{n}\int_{\mathbb{H}}\langle\phi|\eta^i_q\rangle\langle \zeta^i_q|\psi\rangle d\mu(q).
\end{equation}
If we take $T=\displaystyle\sum_{i=1}^{n}\int_{\mathbb{H}}|\zeta^i_q\rangle\langle \eta^i_q|d\mu(q)$ then $T\phi=\displaystyle\sum_{i=1}^{n}\int_{\mathbb{H}}|\zeta^i_q\rangle\langle \eta^i_q|\phi \rangle d\mu(q).$ Hence,
$$\langle\psi|T\phi\rangle=\displaystyle\sum_{i=1}^{n}\int_{\mathbb{H}}\langle \psi|\zeta^i_q\rangle\langle \eta^i_q|\phi \rangle d\mu(q).$$ Now 
\begin{eqnarray*}
\langle T\phi|\psi\rangle &=&\overline{\langle \psi|T\phi\rangle }\\
&=&\sum_{i=1}^{n}\int_{\mathbb{H}}\overline{\langle \psi|\zeta^i_q\rangle\langle \eta^i_q|\phi \rangle} d\mu(q).\\
&=&\sum_{i=1}^{n}\int_{\mathbb{H}}\overline{\langle \eta^i_q|\phi \rangle}~\overline{\langle \psi|\zeta^i_q\rangle} d\mu(q).\\
&=& \sum_{i=1}^{n}\int_{\mathbb{H}}\langle \phi| \eta^i_q\rangle\langle \zeta^i_q|\psi\rangle d\mu(q)\\
&=&\langle \phi|A\psi\rangle.
\end{eqnarray*}
Therefore, $\langle \phi|A\psi\rangle=\langle T\phi|\psi\rangle$ for all $\phi,\psi\in V_\quat^R$.  That is, $T=A^{\dagger}.$
\end{proof}
\section{Frame perturbation}
In this section we present perturbations of rank $n$ continuous frames in $V_\quat^R$ following the  frame perturbation theory presented, for complex discrete frames, in \cite{Chen, Ole3}.
\begin{theorem}\label{T7}
Let $\{\eta^i_q\in V_\mathbb{H}^R~|~i=1,2,\cdots, n,~q\in \mathbb{H}\}$ be a rank $n$ right quaternionic continuous frame with bounds $m(A)$ and $M(A)$ and frame operator $A$. Then any family $\{\zeta^i_q\in V_\mathbb{H}^R~|~i=1,2,\cdots, n,~q\in \mathbb{H}\}$ satisfying
\begin{equation}\label{Assum}
\kappa:=\sum_{i=1}^{n}\int_{\mathbb{H}}\left\|\eta^i_q-\zeta^i_q \right\|^2d\mu(q)<m(A) 
\end{equation} 
is a rank $n$ continuous frame for $V_\mathbb{H}^R$ with bounds $m(A)\left(1-\sqrt{\frac{\kappa}{m(A)}} \right)^{2}$  and  $M(A)\left( 1+\sqrt{\displaystyle\frac{\kappa}{M(A)}}\right)^{2}$.
\end{theorem}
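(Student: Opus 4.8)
The plan is to adapt the classical Paley--Wiener perturbation argument (as in \cite{Chen,Ole3}) to the continuous quaternionic setting. The essential observation is that, although the inner product is $\mathbb{H}$-valued, for fixed $\phi\in V_\mathbb{H}^R$ the passage from a frame family to its analysis coefficients lands in a space carrying an honest \emph{real-valued} norm, on which the ordinary triangle inequality is available. Concretely, I would fix $\phi\in V_\mathbb{H}^R$ and set $u^i(q):=\langle\eta^i_q|\phi\rangle$ and $v^i(q):=\langle\zeta^i_q|\phi\rangle$, viewing $u=(u^i)_{i=1}^{n}$ and $v=(v^i)_{i=1}^{n}$ as $\mathbb{H}^n$-valued functions of $q$. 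Since $\mathbb{H}\cong\mathbb{R}^4$ isometrically through the quaternion modulus, these are elements of $L^2(\mathbb{H},d\mu;\mathbb{H}^n)$, the space of square-integrable $\mathbb{H}^n$-valued functions, carrying the real-valued norm $\|u\|=\left(\sum_{i=1}^{n}\int_{\mathbb{H}}|u^i(q)|^2\,d\mu(q)\right)^{1/2}$, and Minkowski's inequality supplies the triangle inequality together with its reverse form $\big|\,\|u\|-\|v\|\,\big|\le\|u-v\|$.

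Next I would estimate the three norms involved. Applying Theorem \ref{T4} to the original frame gives $\sqrt{m(A)}\,\|\phi\|\le\|u\|\le\sqrt{M(A)}\,\|\phi\|$. For the difference, additivity of the inner product in its first argument (which follows from property (iii) together with the conjugate symmetry (i)) yields $u^i(q)-v^i(q)=\langle\eta^i_q-\zeta^i_q|\phi\rangle$, and the quaternionic Cauchy--Schwarz inequality gives $|\langle\eta^i_q-\zeta^i_q|\phi\rangle|^2\le\|\eta^i_q-\zeta^i_q\|^2\,\|\phi\|^2$. Summing over $i$, integrating against $d\mu$, and invoking the hypothesis (\ref{Assum}) then produces
$$\|u-v\|^2=\sum_{i=1}^{n}\int_{\mathbb{H}}|\langle\eta^i_q-\zeta^i_q|\phi\rangle|^2\,d\mu(q)\le\kappa\,\|\phi\|^2,$$
so that $\|u-v\|\le\sqrt{\kappa}\,\|\phi\|$.

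Finally I would combine these estimates through the reverse triangle inequality, which gives $\|u\|-\|u-v\|\le\|v\|\le\|u\|+\|u-v\|$ and hence $\big(\sqrt{m(A)}-\sqrt{\kappa}\big)\|\phi\|\le\|v\|\le\big(\sqrt{M(A)}+\sqrt{\kappa}\big)\|\phi\|$. Because the hypothesis $\kappa<m(A)$ forces $\sqrt{m(A)}-\sqrt{\kappa}>0$, the lower quantity is nonnegative, so both inequalities may be squared; recalling that $\|v\|^2=\sum_{i=1}^{n}\int_{\mathbb{H}}|\langle\zeta^i_q|\phi\rangle|^2\,d\mu(q)$ one obtains
$$m(A)\left(1-\sqrt{\frac{\kappa}{m(A)}}\right)^{2}\|\phi\|^2\le\sum_{i=1}^{n}\int_{\mathbb{H}}|\langle\zeta^i_q|\phi\rangle|^2\,d\mu(q)\le M(A)\left(1+\sqrt{\frac{\kappa}{M(A)}}\right)^{2}\|\phi\|^2,$$
using $(\sqrt{m(A)}-\sqrt{\kappa})^2=m(A)(1-\sqrt{\kappa/m(A)})^2$ and its analogue for the upper bound. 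As $\phi$ was arbitrary, $\{\zeta^i_q\}$ is a rank $n$ continuous frame with exactly the claimed bounds.

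The step I expect to demand the most care is the first one: justifying rigorously that $\|\cdot\|$ above is a norm and that the reverse triangle inequality is legitimate in the $\mathbb{H}$-valued $L^2$ setting. This requires checking measurability of the integrands, observing that $u$ lies in $L^2$ by the upper frame bound of Theorem \ref{T4} while $u-v$ lies in $L^2$ because $\kappa<\infty$, so that $v=u-(u-v)$ does too, and noting that the non-commutativity of $\mathbb{H}$ poses no obstruction to Minkowski's inequality, since only the real-valued modulus enters the norm. Once this functional-analytic scaffolding is secured, the remaining manipulations are routine triangle-inequality bookkeeping, with the condition $\kappa<m(A)$ used solely to keep the lower bound positive so that squaring preserves the inequality.
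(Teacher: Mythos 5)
Your proof is correct, but it takes a genuinely different route from the paper's. You run the classical Christensen-style perturbation argument directly at the level of analysis coefficients: send $\phi$ to $u=(\langle\eta^i_q|\phi\rangle)$ and $v=(\langle\zeta^i_q|\phi\rangle)$ in the real-normed space $L^2(\mathbb{H},d\mu;\mathbb{H}^n)$, bound $\|u-v\|\le\sqrt{\kappa}\,\|\phi\|$ by quaternionic Cauchy--Schwarz together with (\ref{Assum}), and conclude with the reverse triangle inequality and Theorem \ref{T4}; squaring is legitimate because $\kappa<m(A)$ keeps the lower estimate nonnegative. The paper instead argues through operators: it defines $\mathfrak{U}\phi=\sum_{i}\int_{\mathbb{H}}\zeta^i_q\langle\phi|A^{-1}\eta^i_q\rangle\,d\mu(q)$, uses the frame decomposition to show $\|I_{V_\mathbb{H}^R}-\mathfrak{U}\|\le\sqrt{\kappa/m(A)}<1$, inverts $\mathfrak{U}$ to extract the lower bound from $\phi=\mathfrak{U}\mathfrak{U}^{-1}\phi$, and obtains the upper bound separately from the synthesis operator $T$ via the identification of the optimal upper frame bound with $\|T\|^2$. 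Both routes produce exactly the constants $m(A)\bigl(1-\sqrt{\kappa/m(A)}\bigr)^{2}$ and $M(A)\bigl(1+\sqrt{\kappa/M(A)}\bigr)^{2}$. What your approach buys is economy and robustness: you never need $\mathfrak{U}$ to be bounded (the paper's boundedness estimate introduces $\alpha=\max_{i,j}\sup_{q}|\langle\zeta^i_q|\zeta^j_q\rangle|$, whose finiteness does not follow from the hypotheses), nor any invertibility machinery. What the paper's approach buys is the explicit reconstruction operator $\mathfrak{U}$ and its inverse, which echo the templates reused in the later theorems. The one point you flag --- that the coefficient space with the quaternionic modulus is an honest real $L^2$ space in which Minkowski and its reverse form hold --- is exactly the right thing to check, and it goes through with no more than the integrability the paper already assumes implicitly.
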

\begin{proof}
Suppose that $\{\eta^i_q\in V_\mathbb{H}^R~|~i=1,2,\cdots, n,~q\in \mathbb{H}\}$ is a rank $n$ right quaternionic continuous frame with bounds $m(A)$ and $M(A).$ Then 
\begin{equation*}
m(A)\|\phi\|^2\leq\sum_{i=1}^{n}\int_{\mathbb{H}}|\langle\eta_{q}^i|\phi\rangle|^2 d\mu(q)\leq M(A)\|\phi\|^2.
\end{equation*}	
From \ref{Assum}, $\{\zeta^i_q\in V_\mathbb{H}^R~|~i=1,2,\cdots, n,~q\in \mathbb{H}\}$ is a continuous Bessel family in $V_\mathbb{H}^R.$ Thus, we can define an  operator $\mathfrak{U}:V_\mathbb{H}^R\longrightarrow V_\mathbb{H}^R$ by\vspace{-0.5cm}
\begin{equation}
\mathfrak{U}\phi=\sum_{i=1}^{n}\int_{\mathbb{H}}\zeta^i_q\left\langle \phi|A^{-1}\eta^i_q\right\rangle d\mu(q). 
\end{equation}
The operator $\mathfrak{U}$ is bounded.
To see it, let $\phi\in V_\mathbb{H}^R,$
\begin{eqnarray*}
\left\|\mathfrak{U}\phi \right\|^{2} &=&\left\langle \mathfrak{U}\phi|\mathfrak{U}\phi\right\rangle\\
&=&\left\langle \sum_{i=1}^{n}\int_{\mathbb{H}}\zeta^i_q\left\langle \phi|A^{-1}\eta^i_q\right\rangle d\mu(q)|\sum_{j=1}^{n}\int_{\mathbb{H}}\zeta^j_q\left\langle \phi \vert A^{-1}\eta^j_q\right\rangle d\mu(q)\right\rangle\\
&=&\sum_{i=1}^{n}\sum_{j=1}^{n}\int_{\mathbb{H}}\overline{\left\langle \phi|A^{-1}\eta^i_q\right\rangle}\left\langle  \zeta^i_q|\zeta^j_q\right\rangle \left\langle \phi| A^{-1}\eta^j_q\right\rangle d\mu(q)\\
&\leq& \left|\sum_{i=1}^{n}\sum_{j=1}^{n}\int_{\mathbb{H}}\overline{\left\langle \phi|A^{-1}\eta^i_q\right\rangle}\left\langle  \zeta^i_q|\zeta^j_q\right\rangle \left\langle \phi| A^{-1}\eta^j_q\right\rangle d\mu(q)\right|\\
&\leq&\sum_{i=1}^{n}\sum_{j=1}^{n}\int_{\mathbb{H}}\left|\overline{\left\langle \phi|A^{-1}\eta^i_q\right\rangle}\right|\left|\left\langle  \zeta^i_q|\zeta^j_q\right\rangle\right|\left|\left\langle \phi| A^{-1}\eta^j_q\right\rangle \right|d\mu(q)\\
&\leq&\alpha\sum_{i=1}^{n}\sum_{j=1}^{n}\int_{\mathbb{H}}\left|\overline{\left\langle \phi|A^{-1}\eta^i_q\right\rangle}\right|\left|\left\langle \phi| A^{-1}\eta^j_q\right\rangle \right|d\mu(q),\mbox{~where ~} \alpha=\max_{i,j}\sup_{q\in\quat}\left|\left\langle  \zeta^i_q|\zeta^j_q\right\rangle\right|\\
&\leq&\frac{\alpha}{2}\sum_{i=1}^{n}\sum_{j=1}^{n}\int_{\mathbb{H}}\left(\left|\overline{\left\langle \phi|A^{-1}\eta^i_q\right\rangle}\right|^{2}+\left|\left\langle \phi| A^{-1}\eta^j_q\right\rangle \right|^{2}\right)d\mu(q)\\
&\leq&\frac{n\alpha}{2}\sum_{i=1}^{n}\int_{\mathbb{H}}\left|\left\langle \phi|A^{-1}\eta^i_q\right\rangle\right|^{2}d\mu(q)+\frac{n\alpha}{2}\sum_{j=1}^{n}\int_{\mathbb{H}}\left|\left\langle \phi|A^{-1}\eta^j_q\right\rangle\right|^{2}d\mu(q)\\
&\leq&\frac{n\alpha}{2}\frac{1}{m(A)}\left\|\phi\right\|^{2}+\frac{n\alpha}{2}\frac{1}{m(A)}\left\|\phi\right\|^{2}\\
&=&\frac{n\alpha}{m(A)}\left\|\phi\right\|^{2}.
\end{eqnarray*}
It follows that there exists $K>0$ such that $\left\|\mathfrak{U}\phi \right\|\leq K\left\|\phi\right\|,$ where $K=\displaystyle\sqrt{\frac{n\alpha}{m(A)}}.$\\
Hence $\mathfrak{U}$ is bounded. Now, from theorem \ref{FD},
\begin{eqnarray*}
\left\|\phi-\mathfrak{U}\phi \right\|^{2} &=& \left\|\sum_{i=1}^{n}\int_{\mathbb{H}}\eta^i_q\left\langle \phi|A^{-1}\eta^i_q\right\rangle d\mu(q)-\sum_{i=1}^{n}\int_{\mathbb{H}}\zeta^i_q\left\langle \phi|A^{-1}\eta^i_q\right\rangle d\mu(q) \right\|^{2}\\
&=&\left\| \sum_{i=1}^{n}\int_{\mathbb{H}}(\eta^i_q-\zeta^i_q) \left\langle \phi|A^{-1}\eta^i_q\right\rangle d\mu(q)  \right\|^{2}\\
&\leq&\sum_{i=1}^{n}\int_{\mathbb{H}}\left|\left\langle \phi|A^{-1}\eta^i_q\right\rangle\right|^{2}\left\| \eta^i_q-\zeta^i_q    \right\|^{2}d\mu(q)\\
&\leq&\left( \sum_{i=1}^{n}\int_{\mathbb{H}}\left\| \eta^i_q-\zeta^i_q\right\| ^{2}d\mu(q)\right) \left( \sum_{i=1}^{n}\int_{\mathbb{H}}\left|\left\langle \phi|A^{-1}\eta^i_q\right\rangle\right |^{2} d\mu(q)\right) \\
&\leq&\kappa\frac{1}{m(A)}\left\| \phi\right\|^{2}. 
\end{eqnarray*}
That is, 
\begin{eqnarray*}
\left\|\phi-\mathfrak{U}\phi \right\|^{2} &=& \left\|\mathcal{I}_{V_\mathbb{H}^R}\phi-\mathfrak{U}\phi \right\|^{2}
=	\left\|(\mathcal{I}_{V_\mathbb{H}^R}-\mathfrak{U})\phi \right\|^{2}
\leq\frac{\kappa}{m(A)}\left\| \phi\right\|^{2}. 
\end{eqnarray*}
Therefore, $\displaystyle\left\|(\mathcal{I}_{V_\mathbb{H}^R}-\mathfrak{U})\phi \right\|\leq \sqrt{\frac{\kappa}{m(A)}}\left\| \phi\right\|.$ It follows that $\displaystyle\left\|\mathcal{I}_{V_\mathbb{H}^R}-\mathfrak{U} \right\|\leq \sqrt{\frac{\kappa}{m(A)}}.$ 
Thus $\displaystyle\left\|\mathcal{I}_{V_\mathbb{H}^R}-\mathfrak{U} \right\|\leq \sqrt{\frac{\kappa}{m(A)}}<1$ and $\mathfrak{U}$ is invertible. Also we have
$$\left|\left\| \mathfrak{U}\right\| - \left\|\mathcal{I}_{V_\mathbb{H}^R}\right\|  \right|\leq \left\|\mathcal{I}_{V_\mathbb{H}^R}-\mathfrak{U} \right\|\leq \sqrt{\frac{\kappa}{m(A)}}.  $$
Hence, $\displaystyle\left\| \mathfrak{U}\right\|\leq 1+\sqrt{\frac{\kappa}{m(A)}}  $ and $\displaystyle\left\| \mathfrak{U}^{-1}\right\|\leq \displaystyle\frac{1}{1-\sqrt{\frac{\kappa}{m(A)}}}.$
For $\phi\in V_\mathbb{H}^R,$
\begin{equation*}
\phi=\mathfrak{U}\mathfrak{U}^{-1}\phi=\sum_{i=1}^{n}\int_{\mathbb{H}}\zeta^i_q\left\langle \mathfrak{U}^{-1}\phi|A^{-1}\eta^i_q\right\rangle d\mu(q). 
\end{equation*}
Therefore
\begin{eqnarray*}
\left\| \phi\right\|^{4}&=&\left|\left\langle \sum_{i=1}^{n}\int_{\mathbb{H}}\zeta^i_q\left\langle \mathfrak{U}^{-1}\phi|A^{-1}\eta^i_q\right\rangle d\mu(q)|\phi\right\rangle  \right|^{2}\\
&=& \left|\sum_{i=1}^{n}\int_{\mathbb{H}}\overline{\left\langle \mathfrak{U}^{-1}\phi|A^{-1}\eta^i_q\right\rangle}\left\langle \zeta^i_q|\phi \right\rangle d\mu(q) \right|^{2}\\
&\leq&\left( \sum_{i=1}^{n}\int_{\mathbb{H}}\left|\overline{\left\langle \mathfrak{U}^{-1} \phi |A^{-1} \eta^i_q\right\rangle}   \right|^{2}d\mu(q)\right) \left( \sum_{i=1}^{n}\int_{\mathbb{H}}\left|\left\langle \zeta^i_q|\phi \right\rangle\right|^{2}d\mu(q)\right) \\
&\leq&\frac{1}{m(A)}\left\| \mathfrak{U}^{-1}\phi\right\|^{2} \cdot\sum_{i=1}^{n}\int_{\mathbb{H}}\left|\left\langle \zeta^i_q|\phi \right\rangle\right|^{2}d\mu(q)\\
&\leq&\frac{1}{m(A)}\left\| \mathfrak{U}^{-1}\right\|^{2}\left\| \phi\right\| ^{2} \cdot\sum_{i=1}^{n}\int_{\mathbb{H}}\left|\left\langle \zeta^i_q|\phi \right\rangle\right|^{2}d\mu(q)\\
&\leq&\frac{\left\| \phi\right\| ^{2}}{m(A)\left(1-\sqrt{\frac{\kappa}{m(A)}} \right)^{2} }\cdot\sum_{i=1}^{n}\int_{\mathbb{H}}\left|\left\langle \zeta^i_q|\phi \right\rangle\right|^{2}d\mu(q).
\end{eqnarray*}
Hence,
\begin{equation}\label{me1}
\sum_{i=1}^{n}\int_{\mathbb{H}}\left|\left\langle \zeta^i_q|\phi \right\rangle\right|^{2}d\mu(q)\geq m(A)\left(1-\sqrt{\frac{\kappa}{m(A)}} \right)^{2}\left\| \phi\right\| ^{2},
\end{equation}
for all $\phi\in V_\mathbb{H}^R.$
On the other hand define a right linear operator $T:\quat^n\longrightarrow V_\mathbb{H}^R$ by
\begin{equation}
T\{c_{i}\}_{i=1}^n:=\sum_{i=1}^{n}\int_{\mathbb{H}}\zeta^i_q c_{i} d\mu(q)
\end{equation}
The frame operator for $\{\zeta^i_q\in V_\mathbb{H}^R~|~i=1,2,\cdots, n,~q\in \mathbb{H}\}$ is $TT^{\dagger},$ so its optimal upper frame bound  is $\left\|T \right\|^{2}. $
For $\{c_{i}\}\in \quat^n,$
\begin{eqnarray*}
\left\| T\{c_{i}\}\right\| &=&\left\| \sum_{i=1}^{n}\int_{\mathbb{H}}\zeta^i_q c_{i} d\mu(q)\right\|\\
&\leq& \left\| \sum_{i=1}^{n}\int_{\mathbb{H}}(\zeta^i_q-\eta^i_q) c_{i} d\mu(q)\right\| +\left\| \sum_{i=1}^{n}\int_{\mathbb{H}}\eta^i_q c_{i} d\mu(q)\right\| \\
&\leq&(\sqrt{M(A)}+\sqrt{\kappa})\left\| \{c_{i}\}\right\|.
\end{eqnarray*}
Hence $\left\| T\{c_{i}\}\right\|\leq (\sqrt{M(A)}+\sqrt{\kappa})\left\| \{c_{i}\}\right\|$ and $\left\| T\right\| ^{2}\leq (\sqrt{M(A)}+\sqrt{\kappa})^{2}. $
Therefore $\left\| T\right\| ^{2}\leq M(A)\left( 1+\sqrt{\displaystyle\frac{\kappa}{M(A)}}\right)^{2} . $
Since $\left\|T \right\|^{2}$  is the optimal upper frame bound for $\{\zeta^i_q\in V_\mathbb{H}^R~|~i=1,2,\cdots, n,~q\in \mathbb{H}\},$ we have
\begin{equation}\label{me2}
\sum_{i=1}^{n}\int_{\mathbb{H}}\left|\left\langle \zeta^i_q|\phi \right\rangle\right|^{2}d\mu(q)\leq M(A)\left( 1+\sqrt{\displaystyle\frac{\kappa}{M(A)}}\right)^{2}\left\| \phi\right\| ^{2} ,
\end{equation}
for all $\phi\in V_\mathbb{H}^R.$
From \ref{me1} and \ref{me2}, we get
\begin{equation}
m(A)\left(1-\sqrt{\frac{\kappa}{m(A)}} \right)^{2}\left\| \phi\right\| ^{2}\leq\sum_{i=1}^{n}\int_{\mathbb{H}}\left|\left\langle \zeta^i_q|\phi \right\rangle\right|^{2}d\mu(q)\leq M(A)\left( 1+\sqrt{\displaystyle\frac{\kappa}{M(A)}}\right)^{2} \left\| \phi\right\| ^{2},
\end{equation} 
for all $\phi\in V_\mathbb{H}^R.$
Hence $\{\zeta^i_q\in V_\mathbb{H}^R~|~i=1,2,\cdots, n,~q\in \mathbb{H}\}$ is a rank $n$ continuous frame with bounds $m(A)\left(1-\sqrt{\frac{\kappa}{m(A)}} \right)^{2}$  and  $M(A)\left( 1+\sqrt{\displaystyle\frac{\kappa}{M(A)}}\right)^{2}.$
\end{proof}
\begin{theorem}
Let $\{\eta^i_q\in V_\mathbb{H}^R~|~i=1,2,\cdots, n,~q\in \mathbb{H}\}$ be a rank $n$ right quaternionic continuous frame with bounds $m(A)$ and $M(A).$ Let $\{\zeta^i_q\in V_\mathbb{H}^R~|~i=1,2,\cdots, n,~q\in \mathbb{H}\}$ be any family defined in (\ref{Assum}).Then $\{\eta^i_q+\zeta^i_q\in V_\mathbb{H}^R~|~i=1,2,\cdots, n,~q\in \mathbb{H}\}$ is a rank $n$ continuous frame in $V_\mathbb{H}^R.$
\end{theorem}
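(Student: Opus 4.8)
The plan is to verify the two-sided frame inequality for $\{\eta^i_q+\zeta^i_q\}$ directly, exploiting the fact that for any family $\{\theta^i_q\}$ the analysis coefficients $(i,q)\mapsto\langle\theta^i_q|\phi\rangle$ lie in the quaternion-valued $L^2$ space over $\{1,\dots,n\}\times\mathbb{H}$ (counting measure times $\mu$), whose norm $\phi\mapsto\big(\sum_{i=1}^{n}\int_{\mathbb{H}}|\langle\theta^i_q|\phi\rangle|^2\,d\mu(q)\big)^{1/2}$ obeys Minkowski's (triangle) inequality. Since $\theta^i_q\mapsto\langle\theta^i_q|\phi\rangle$ is additive in the family, the coefficient functions of $\{\eta^i_q+\zeta^i_q\}$, $\{\eta^i_q\}$ and $\{\eta^i_q-\zeta^i_q\}$ are tied together by the identity $\langle\eta^i_q+\zeta^i_q|\phi\rangle=2\langle\eta^i_q|\phi\rangle-\langle\eta^i_q-\zeta^i_q|\phi\rangle$, and applying the forward and reverse triangle inequalities to this single identity will produce both frame bounds.

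First I would record the basic perturbation estimate: by the Cauchy--Schwartz inequality applied pointwise in $q$, $|\langle\eta^i_q-\zeta^i_q|\phi\rangle|\leq\|\eta^i_q-\zeta^i_q\|\,\|\phi\|$, so that
\[
\sum_{i=1}^{n}\int_{\mathbb{H}}|\langle\eta^i_q-\zeta^i_q|\phi\rangle|^2\,d\mu(q)\leq\left(\sum_{i=1}^{n}\int_{\mathbb{H}}\|\eta^i_q-\zeta^i_q\|^2\,d\mu(q)\right)\|\phi\|^2=\kappa\|\phi\|^2.
\]
Writing $F(\phi)$ and $E(\phi)$ for the $L^2$ norms of the coefficient functions of $\{\eta^i_q\}$ and of $\{\eta^i_q-\zeta^i_q\}$ respectively, Theorem \ref{T4} gives $\sqrt{m(A)}\,\|\phi\|\leq F(\phi)\leq\sqrt{M(A)}\,\|\phi\|$, while the display above gives $E(\phi)\leq\sqrt{\kappa}\,\|\phi\|$.

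For the upper (Bessel) bound I would apply the ordinary triangle inequality to the identity above: the $L^2$ norm of the coefficients of $\{\eta^i_q+\zeta^i_q\}$ is at most $2F(\phi)+E(\phi)\leq(2\sqrt{M(A)}+\sqrt{\kappa})\|\phi\|$, yielding an upper frame bound $(2\sqrt{M(A)}+\sqrt{\kappa})^2$ (equivalently, this follows by adding the Bessel bounds of $\{\eta^i_q\}$ and of $\{\zeta^i_q\}$, the latter supplied by Theorem \ref{T7}). The substantive step is the lower bound, where I would instead use the reverse triangle inequality on the same identity: the $L^2$ norm of the coefficients of $\{\eta^i_q+\zeta^i_q\}$ is at least $2F(\phi)-E(\phi)\geq(2\sqrt{m(A)}-\sqrt{\kappa})\|\phi\|$.

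The one point that must be checked, and is the crux, is that this lower constant is strictly positive. Here the hypothesis $\kappa<m(A)$ enters: it gives $\sqrt{\kappa}<\sqrt{m(A)}\leq 2\sqrt{m(A)}$, hence $2\sqrt{m(A)}-\sqrt{\kappa}>0$ and the lower frame bound $(2\sqrt{m(A)}-\sqrt{\kappa})^2$ is positive. Squaring the two estimates then yields
\[
(2\sqrt{m(A)}-\sqrt{\kappa})^2\|\phi\|^2\leq\sum_{i=1}^{n}\int_{\mathbb{H}}|\langle\eta^i_q+\zeta^i_q|\phi\rangle|^2\,d\mu(q)\leq(2\sqrt{M(A)}+\sqrt{\kappa})^2\|\phi\|^2
\]
for every $\phi\in V_\mathbb{H}^R$, which is exactly the frame condition. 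I expect the only genuine obstacle to be justifying Minkowski's inequality in the quaternion-valued $L^2$ setting; since $|\cdot|$ is a norm on $\mathbb{H}\cong\mathbb{R}^4$ the pointwise bound $|a+b|\leq|a|+|b|$ holds, and the sum-integral version follows exactly as in the real or complex case.
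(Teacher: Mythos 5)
Your argument is correct, and it follows a genuinely different route from the paper's. The paper first invokes Theorem \ref{T7} to conclude that $\{\zeta^i_q\}$ is itself a rank $n$ continuous frame with bounds $m(A)\bigl(1-\sqrt{\kappa/m(A)}\bigr)^2$ and $M(A)\bigl(1+\sqrt{\kappa/M(A)}\bigr)^2$, and then adds the two frame inequalities termwise, estimating $|\langle\phi|\eta^i_q\rangle+\langle\phi|\zeta^i_q\rangle|^2$ by $|\langle\eta^i_q|\phi\rangle|^2+|\langle\zeta^i_q|\phi\rangle|^2$ from above and (``similarly'') from below, arriving at the bounds $m(A)\bigl[1+\bigl(1-\sqrt{\kappa/m(A)}\bigr)^2\bigr]$ and $M(A)\bigl[1+\bigl(1+\sqrt{\kappa/M(A)}\bigr)^2\bigr]$. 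You instead work from the identity $\langle\eta^i_q+\zeta^i_q|\phi\rangle=2\langle\eta^i_q|\phi\rangle-\langle\eta^i_q-\zeta^i_q|\phi\rangle$, apply Minkowski's inequality and its reverse in the quaternion-valued $L^2$ space, and control the perturbation term by the direct Cauchy--Schwarz estimate $E(\phi)\le\sqrt{\kappa}\,\|\phi\|$, obtaining the bounds $\bigl(2\sqrt{m(A)}-\sqrt{\kappa}\bigr)^2$ and $\bigl(2\sqrt{M(A)}+\sqrt{\kappa}\bigr)^2$; positivity of the lower constant is exactly where $\kappa<m(A)$ enters. The two routes give different numerical constants, and yours is the more defensible derivation: the pointwise inequalities $|a+b|^2\le|a|^2+|b|^2$ and $|a+b|^2\ge|a|^2+|b|^2$ underlying the paper's termwise addition are both false for general quaternions (the upper one needs a factor of $2$, and lower frame bounds do not add under summing two families), so the paper's stated constants are not actually justified by its argument, whereas your triangle-inequality bounds are. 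The only ingredient you use that the paper leaves implicit is Minkowski's inequality for $\mathbb{H}$-valued square-integrable functions, which, as you observe, holds because $|\cdot|$ is the Euclidean norm on $\mathbb{R}^4$; your bypassing of Theorem \ref{T7} for the lower bound also makes the proof self-contained up to Theorem \ref{T4} and the definition of $\kappa$ in (\ref{Assum}).
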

\begin{proof}
For $\phi\in V_{\mathbb{H}}^{R},$ we have
\begin{eqnarray*}
\sum_{i=1}^{n}\int_{\mathbb{H}}\left|\left\langle \eta^i_q+\zeta^i_q|\phi\right\rangle\right|^{2}d\mu(q)
&=&\sum_{i=1}^{n}\int_{\mathbb{H}}\left|\left\langle \phi|\eta^i_q+\zeta^i_q\right\rangle\right|^{2}d\mu(q)\\
&=&\sum_{i=1}^{n}\int_{\mathbb{H}}\left| \left\langle \phi|\eta^i_q\right\rangle+ \left\langle \phi|\zeta^i_q\right\rangle\right|^{2}d\mu(q)\\
&\leq&\sum_{i=1}^{n}\int_{\mathbb{H}}\left| \left\langle \eta^i_q|\phi\right\rangle\right|^{2}d\mu(q)+\sum_{i=1}^{n}\int_{\mathbb{H}}\left| \left\langle \zeta^i_q|\phi\right\rangle\right|^{2}d\mu(q).
\end{eqnarray*}
Since $\{\eta^i_q\in V_\mathbb{H}^R~|~i=1,2,\cdots, n,~q\in \mathbb{H}\}$ and $\{\zeta^i_q\in V_\mathbb{H}^R~|~i=1,2,\cdots, n,~q\in \mathbb{H}\}$  are rank $n$ continuous frames in $V_\mathbb{H}^R,$ for  $\phi\in V_\mathbb{H}^R,$
\begin{equation*}
 m(A)\|\phi\|^2\leq\sum_{i=1}^{n}\int_{\mathbb{H}}|\langle\eta_{q}^i|\phi\rangle|^2 d\mu(q)\leq M(A)\|\phi\|^2   
\end{equation*}
and
\begin{equation*}
m(A)\left(1-\sqrt{\frac{\kappa}{m(A)}} \right)^{2}\|\phi\|^2\leq\sum_{i=1}^{n}\int_{\mathbb{H}}\left|\left\langle \zeta^i_q|\phi \right\rangle\right|^{2}d\mu(q)\leq M(A)\left( 1+\sqrt{\displaystyle\frac{\kappa}{M(A)}}\right)^{2}\|\phi\|^2 .
\end{equation*} 
Therefore
\begin{equation*}
  \sum_{i=1}^{n}\int_{\mathbb{H}}\left|\left\langle \eta^i_q+\zeta^i_q|\phi\right\rangle\right|^{2}d\mu(q)\leq M(A)\left[1+\left( 1+\sqrt{\displaystyle\frac{\kappa}{M(A)}}\right)^{2}\right]\|\phi\|^{2}.  
\end{equation*}
Similarly one can obtain
\begin{equation*}
m(A)\left[1+\left( 1-\sqrt{\displaystyle\frac{\kappa}{m(A)}}\right)^{2}\right]\|\phi\|^{2}\leq  \sum_{i=1}^{n}\int_{\mathbb{H}}\left|\left\langle \eta^i_q+\zeta^i_q|\phi\right\rangle\right|^{2}d\mu(q).  
\end{equation*}
Hence 
\begin{equation*}
m(A)\left[1+\left( 1-\sqrt{\displaystyle\frac{\kappa}{m(A)}}\right)^{2}\right]\|\phi\|^{2}\leq  \sum_{i=1}^{n}\int_{\mathbb{H}}\left|\left\langle \eta^i_q+\zeta^i_q|\phi\right\rangle\right|^{2}d\mu(q)\leq M(A)\left[1+\left( 1+\sqrt{\displaystyle\frac{\kappa}{M(A)}}\right)^{2}\right]\|\phi\|^{2}, 
\end{equation*}
for all $\phi\in V_{\mathbb{H}}^{R}.$ Therefore,  $\{\eta^i_q+\zeta^i_q\in V_\mathbb{H}^R~|~i=1,2,\cdots, n,~q\in \mathbb{H}\}$ is a rank $n$ continuous frame in  $V_\mathbb{H}^R$ with bounds $m(A)\left[1+\left( 1-\sqrt{\displaystyle\frac{\kappa}{m(A)}}\right)^{2}\right]$ and $M(A)\left[1+\left( 1+\sqrt{\displaystyle\frac{\kappa}{M(A)}}\right)^{2}\right].$ 
\end{proof}
\begin{proposition}
Frame operator for $\{\eta^i_q+\zeta^i_q\in V_\mathbb{H}^R~|~i=1,2,\cdots, n,~q\in \mathbb{H}\}$ is 
\begin{equation}
 A^{\prime}= \sum_{i=1}^{n}\int_{\mathbb{H}}|\eta^i_q+\zeta^i_q\rangle\langle \eta^i_q+\zeta^i_q|d\mu(q). 
\end{equation}
Then $A^{\prime}$ is self adjoint and positive.
\end{proposition}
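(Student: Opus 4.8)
The plan is to set $\psi^i_q := \eta^i_q + \zeta^i_q$ and rewrite the operator as $A' = \sum_{i=1}^n \int_{\mathbb{H}} |\psi^i_q\rangle\langle \psi^i_q|\, d\mu(q)$. The previous theorem has already shown that $\{\psi^i_q \mid i=1,\dots,n,\ q\in\mathbb{H}\}$ is a rank $n$ continuous frame with positive lower bound and finite upper bound, so this family is in particular linearly independent for each $q$ and $A' \in GL(V_\mathbb{H}^R)$. With this observation the fastest route is simply to apply Theorem \ref{T2}, which asserts that any operator of the form $\sum_i \int |\psi^i_q\rangle\langle\psi^i_q|\, d\mu(q)$ built from such a family is automatically positive and self-adjoint. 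I would nonetheless record the two properties by direct computation, both because it is short and because it makes the argument self-contained.

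For self-adjointness I would invoke the earlier Proposition computing the adjoint of a rank $n$ integral operator, applied with both families taken equal to $\{\psi^i_q\}$. That result gives
\[
(A')^\dagger = \sum_{i=1}^n \int_{\mathbb{H}} |\psi^i_q\rangle\langle \psi^i_q|\, d\mu(q) = A',
\]
so $A'$ is self-adjoint. Equivalently one checks $\langle \phi | A'\psi\rangle = \langle A'\phi | \psi\rangle$ for all $\phi,\psi \in V_\mathbb{H}^R$ by the same bookkeeping used in that Proposition.

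For positivity I would evaluate the quadratic form: for $\phi\in V_\mathbb{H}^R$,
\[
\langle \phi | A'\phi\rangle = \sum_{i=1}^n \int_{\mathbb{H}} \langle \phi | \psi^i_q\rangle\langle \psi^i_q | \phi\rangle\, d\mu(q).
\]
Writing $w = \langle \psi^i_q | \phi\rangle$ and using property (i), one has $\langle \phi | \psi^i_q\rangle = \overline{w}$, so each integrand equals $\overline{w}\,w = |w|^2 = |\langle \psi^i_q | \phi\rangle|^2 \geq 0$; hence $\langle \phi | A'\phi\rangle = \sum_i \int_{\mathbb{H}} |\langle \psi^i_q | \phi\rangle|^2\, d\mu(q) \geq 0$, and in fact strictly positive for $\phi\neq 0$ thanks to the positive lower frame bound supplied by the previous theorem. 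There is no genuine obstacle here; the only points needing care are the non-commutativity of $\mathbb{H}$ — harmless because $q\overline{q} = \overline{q}q = |q|^2$, so the order of the factors in $\overline{w}w$ is immaterial — and the well-definedness and boundedness of the integral operator $A'$, which is exactly what the preceding frame theorem secures.
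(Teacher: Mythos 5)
Your proposal is correct and follows essentially the same route as the paper: self-adjointness by the adjoint computation for rank $n$ integral operators (specialized to equal families, which is exactly the bookkeeping the paper carries out directly), and positivity by evaluating the quadratic form as $\sum_{i}\int_{\mathbb{H}}|\langle\phi|\eta^i_q+\zeta^i_q\rangle|^2\,d\mu(q)\geq 0$. The extra remarks about invoking Theorem \ref{T2} and strict positivity are not needed and not in the paper, but they do not affect the argument.
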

\begin{proof}
For $\phi\in V_{\mathbb{H}}^{R}, A^{\prime}\left | \phi\right\rangle= \sum_{i=1}^{n}\int_{\mathbb{H}}|\eta^i_q+\zeta^i_q\rangle\langle \eta^i_q+\zeta^i_q|\phi \rangle d\mu(q).$ 
Since $\langle A^{\prime}\phi|\psi\rangle=(A^{\prime}|\phi\rangle)^{\dagger}|\psi\rangle,$
\begin{eqnarray*}
\langle A^{\prime}\phi|\psi\rangle &=&\left(\sum_{i=1}^{n}\int_{\mathbb{H}}|\eta^i_q+\zeta^i_q\rangle\langle \eta^i_q+\zeta^i_q|\phi \rangle d\mu(q)\right)^{\dagger}(|\psi\rangle)\\
&=&\sum_{i=1}^{n}\int_{\mathbb{H}}\langle \phi| \eta^i_q+\zeta^i_q\rangle\langle \eta^i_q+\zeta^i_q| \psi\rangle d\mu(q)\\
&=&\langle \phi| A^{\prime}\psi\rangle.
\end{eqnarray*}
Hence $\langle A^{\prime}\phi|\psi\rangle=\langle \phi| A^{\prime}\psi\rangle$ for all $\phi,\psi\in V_\quat^R$. It follows that $A^{\prime}$ is self adjoint.
 Now, for each $\phi\in V_\quat^R$, 
 \begin{eqnarray*}
 \langle A^{\prime}\phi|\phi\rangle &=& \sum_{i=1}^{n}\int_{\mathbb{H}}\langle \phi| \eta^i_q+\zeta^i_q\rangle\langle \eta^i_q+\zeta^i_q| \phi\rangle d\mu(q)\\
 &=&\sum_{i=1}^{n}\int_{\mathbb{H}}\left |\langle \phi| \eta^i_q+\zeta^i_q\rangle\right |^{2}d\mu(q)\\
 &\geq& 0.
 \end{eqnarray*}
 Thereby $A^{\prime}$ is positive.
\end{proof}
The following results are the quaternionic continuous counterparts of certain perturbations  considered for complex discrete frames in \cite{Chen}.
\begin{theorem}\label{MT}
	Let $\{\eta^i_q\in V_\mathbb{H}^R~|~i=1,2,\cdots, n,~q\in \mathbb{H}\}$ be a rank $n$ right quaternionic continuous frame with bounds $m(A),M(A)$ and $\{\overline{\eta}^i_q\in V_\mathbb{H}^R~|~i=1,2,\cdots, n,~q\in \mathbb{H}\}$ be the dual frame of $\{\eta^i_q\in V_\mathbb{H}^R~|~i=1,2,\cdots, n,~q\in \mathbb{H}\}$ with bounds $C,D.$ Assume that the family $\{\Psi^i_q\in V_\mathbb{H}^R~|~i=1,2,\cdots, n,~q\in \mathbb{H}\}$ satisfies the following two conditions:
	\begin{enumerate}
		\item $\lambda:=\displaystyle\sum_{i=1}^{n}\int_{\mathbb{H}}\left\|\eta^i_q-\Psi^i_q \right\|^{2}d\mu(q)<\infty; $
		\item $\gamma:=\displaystyle\sum_{i=1}^{n}\int_{\mathbb{H}}\left\|\eta^i_q-\Psi^i_q \right\|\left\|\overline{\eta}^i_q\right\|d\mu(q)<1. $
	\end{enumerate}
Then $\{\Psi^i_q\in V_\mathbb{H}^R~|~i=1,2,\cdots, n,~q\in \mathbb{H}\}$ is a rank $n$ continuous frame for $V_\mathbb{H}^R$ with bounds $\displaystyle\frac{(1-\gamma)^{2}}{D}$ and $\displaystyle M(A)\left(1+\sqrt{\frac{\lambda}{M(A)}} \right)^{2}.$
\end{theorem}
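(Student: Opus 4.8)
The plan is to prove the two bounds separately, modelling the argument on Theorem~\ref{T7} but splitting the single constant $\kappa$ used there into the two roles played here by $\lambda$ (which governs the upper bound) and $\gamma$ (which governs the lower bound), and bringing the dual frame $\{\overline{\eta}^i_q\}$ and its upper bound $D$ into the lower estimate.

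For the lower bound I would introduce the right linear operator $\mathfrak{U}:V_\mathbb{H}^R\to V_\mathbb{H}^R$ given by $\mathfrak{U}\phi=\sum_{i=1}^n\int_{\mathbb{H}}\Psi^i_q\langle\phi|\overline{\eta}^i_q\rangle\,d\mu(q)$, the analogue of the operator in Theorem~\ref{T7} but synthesized against the perturbed family $\{\Psi^i_q\}$ with dual-frame coefficients. Writing out $\phi$ by the frame decomposition of Theorem~\ref{FD}, namely $\phi=\sum_{i=1}^n\int_{\mathbb{H}}\eta^i_q\langle\phi|\overline{\eta}^i_q\rangle\,d\mu(q)$, gives $(\mathcal{I}_{V_\mathbb{H}^R}-\mathfrak{U})\phi=\sum_{i=1}^n\int_{\mathbb{H}}(\eta^i_q-\Psi^i_q)\langle\phi|\overline{\eta}^i_q\rangle\,d\mu(q)$. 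The decisive step is to estimate this difference by $\gamma$: applying the Cauchy--Schwarz inequality pointwise in the form $|\langle\phi|\overline{\eta}^i_q\rangle|\le\|\phi\|\,\|\overline{\eta}^i_q\|$ and then invoking hypothesis~(2), one obtains $\|(\mathcal{I}_{V_\mathbb{H}^R}-\mathfrak{U})\phi\|\le\gamma\|\phi\|$, so that $\|\mathcal{I}_{V_\mathbb{H}^R}-\mathfrak{U}\|\le\gamma<1$. By the proposition guaranteeing invertibility of $\mathcal{I}_{V_\mathbb{H}^R}-B$ whenever $\|B\|<1$, the operator $\mathfrak{U}$ is then invertible with $\|\mathfrak{U}^{-1}\|\le 1/(1-\gamma)$.

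With invertibility secured, I would write $\phi=\mathfrak{U}\mathfrak{U}^{-1}\phi=\sum_{i=1}^n\int_{\mathbb{H}}\Psi^i_q\langle\mathfrak{U}^{-1}\phi|\overline{\eta}^i_q\rangle\,d\mu(q)$ and expand $\|\phi\|^4=|\langle\mathfrak{U}\mathfrak{U}^{-1}\phi|\phi\rangle|^2$ exactly as in Theorem~\ref{T7}. A Cauchy--Schwarz split then bounds $\|\phi\|^4$ by the product $\big(\sum_i\int_{\mathbb{H}}|\langle\mathfrak{U}^{-1}\phi|\overline{\eta}^i_q\rangle|^2\,d\mu(q)\big)\big(\sum_i\int_{\mathbb{H}}|\langle\Psi^i_q|\phi\rangle|^2\,d\mu(q)\big)$; the first factor is controlled by the upper frame bound $D$ of the dual frame $\{\overline{\eta}^i_q\}$, giving $D\|\mathfrak{U}^{-1}\phi\|^2\le D\|\mathfrak{U}^{-1}\|^2\|\phi\|^2$. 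Dividing through by $\|\phi\|^2$ and using $\|\mathfrak{U}^{-1}\|\le 1/(1-\gamma)$ yields the announced lower bound $\frac{(1-\gamma)^2}{D}\|\phi\|^2\le\sum_i\int_{\mathbb{H}}|\langle\Psi^i_q|\phi\rangle|^2\,d\mu(q)$.

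For the upper bound the argument is the one already appearing at the close of Theorem~\ref{T7}: hypothesis~(1) shows that $\{\eta^i_q-\Psi^i_q\}$ is a rank $n$ continuous Bessel family with bound $\lambda$, so by Theorem~\ref{SH}, writing $T_\eta$, $T_{\eta-\Psi}$, $T$ for the synthesis operators of $\{\eta^i_q\}$, $\{\eta^i_q-\Psi^i_q\}$ and $\{\Psi^i_q\}$ respectively and using $T=T_\eta-T_{\eta-\Psi}$, one has $\|T\|\le\|T_\eta\|+\|T_{\eta-\Psi}\|\le\sqrt{M(A)}+\sqrt{\lambda}$; since the optimal upper frame bound of $\{\Psi^i_q\}$ equals $\|T\|^2$, this gives $\sum_i\int_{\mathbb{H}}|\langle\Psi^i_q|\phi\rangle|^2\,d\mu(q)\le M(A)(1+\sqrt{\lambda/M(A)})^2\|\phi\|^2$. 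I expect the lower bound to be the main obstacle, and within it the careful handling of the quaternionic inner product: extracting a scalar from the left slot produces its conjugate, so one must verify that no genuine noncommutativity obstructs the estimates. It does not, because every inequality is taken through the modulus $|\cdot|$, under which conjugation is invisible; this is exactly what lets the complex Christensen-type argument transfer to $V_\mathbb{H}^R$.
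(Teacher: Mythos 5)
Your proposal follows essentially the same route as the paper: your operator $\mathfrak{U}$ is exactly the paper's $L(\phi)=\sum_{i=1}^{n}\int_{\mathbb{H}}\Psi^i_q\langle\phi|\overline{\eta}^i_q\rangle\,d\mu(q)$, estimated against the frame decomposition to get $\|\mathcal{I}_{V_\mathbb{H}^R}-\mathfrak{U}\|\leq\gamma<1$, inverted by the Neumann-series proposition, and combined with the same Cauchy--Schwarz split and the dual frame's upper bound $D$ to produce $\frac{(1-\gamma)^2}{D}$. The upper bound is likewise the paper's synthesis-operator argument; your detour through Theorem~\ref{SH} applied to the Bessel family $\{\eta^i_q-\Psi^i_q\}$ with bound $\lambda$ is a marginally cleaner justification of $\|T_{\eta-\Psi}\|\leq\sqrt{\lambda}$ than the paper's direct integral estimate, but it is not a different approach.
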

\begin{proof}
Let $T:\mathbb{H}^{n}\longrightarrow V_{\mathbb{H}}^{R}$ defined by 
\begin{equation*}
T(\{c_{i}\}_{i=1}^{n})=\sum_{i=1}^{n}\int_{\mathbb{H}}\eta^i_q c_{i}~d\mu(q),\mbox{~where~} \{c_{i}\}_{i=1}^{n}\in\quat^n
\end{equation*}
 be the pre-frame operator of the frame $\{\eta^i_q\in V_\mathbb{H}^R~|~i=1,2,\cdots, n,~q\in \mathbb{H}\}.$
From theorem \ref{SH}, $\left\|T \right\| \leq \sqrt{M(A)}.$	
Now define $U:\mathbb{H}^{n}\longrightarrow V_{\mathbb{H}}^{R}$ by 
\begin{equation*}
U(\{c_{i}\}_{i=1}^{n})=\sum_{i=1}^{n}\int_{\mathbb{H}}\Psi^i_q c_{i}~d\mu(q),\mbox{~where~} \{c_{i}\}_{i=1}^{n}\in\quat^n.
\end{equation*}
We have 
\begin{eqnarray*}
\left\| U\right\| &=&\sup_{\|\phi\|=1}\left\| U\phi\right\| \\
&=&\sup_{\|\phi\|=1}\left\| \sum_{i=1}^{n}\int_{\mathbb{H}}\Psi^i_q c_{i}d\mu(q)\right\|,\mbox{~where~} \phi=\{c_{i}\}_{i=1}^{n}\in\quat^n\\
&=&\sup_{\|\phi\|=1}\left\| \sum_{i=1}^{n}\int_{\mathbb{H}}(\Psi^i_q-\eta^i_q+\eta^i_q) c_{i}d\mu(q)\right\|\\
&\leq&\sup_{\|\phi\|=1}\left\| \sum_{i=1}^{n}\int_{\mathbb{H}}(\Psi^i_q-\eta^i_q) c_{i}d\mu(q)\right\|+\sup_{\|\phi\|=1}\left\| \sum_{i=1}^{n}\int_{\mathbb{H}}\eta^i_q c_{i}d\mu(q)\right\|\\
&\leq&\sup_{\|\phi\|=1}\sum_{i=1}^n|c_i|\int_\quat \|\Psi^i_q-\eta^i_q\|~d\mu(q)+\|T\|\\
&\leq&\sqrt{\lambda}+\left\| T\right\| ,\mbox{~by~} (1)\\
&\leq& \sqrt{\lambda}+\sqrt{M(A)}.
\end{eqnarray*}
Hence $U$ is well defined and $\left\| U\right\| \leq \sqrt{\lambda}+\sqrt{M(A)}. $
Now the adjoint $U^{\dagger}$  of $U$ can be defined by 
\begin{equation*}
U^{\dagger} :V_{\mathbb{H}}^{R} \longrightarrow \mathbb{H}^{n} \mbox{~by~} U^{\dagger}(\phi)=\{\left\langle \phi|\Psi_{q}^{i}\right\rangle \}_{i=1}^{n},~\forall \phi\in V_{\mathbb{H}}^{R}.
\end{equation*}
We have 
\begin{eqnarray*}
\sum_{i=1}^{n}\int_{\mathbb{H}}\left|\left\langle \phi|\Psi_{q}^{i}\right\rangle \right|^{2}d\mu(q) &=& \left\| U^{\dagger}	\phi\right\| ^{2}\\
&\leq&\left\| U^\dagger\|^2~\|\phi\right\| ^{2}\\
&=&\{\left\| U\right\|\left\| \phi\right\|\}^{2}\\
&\leq&\{\sqrt{\lambda}+\sqrt{M(A)}\}^{2}\left\| \phi\right\|^{2}\\
&=&M(A)\left(1+\sqrt{\frac{\lambda}{M(A)}} \right)^{2} \left\| \phi\right\|^{2}.
\end{eqnarray*}	
Therefore
\begin{equation}\label{fr1}
\sum_{i=1}^{n}\int_{\mathbb{H}}\left|\left\langle \phi|\Psi_{q}^{i}\right\rangle \right|^{2}d\mu(q)\leq M(A)\left(1+\sqrt{\frac{\lambda}{M(A)}} \right)^{2} \left\| \phi\right\|^{2}.
\end{equation}
Now define $L:V_{\mathbb{H}}^{R}\longrightarrow V_{\mathbb{H}}^{R}$ by 
\begin{equation*}
L(\phi)=\sum_{i=1}^{n}\int_{\mathbb{H}}\Psi_{q}^{i}\left\langle \phi|\overline{\eta}_{q}^{i}\right\rangle d\mu(q),~\forall\phi\in V_{\mathbb{H}}^{R}.
\end{equation*}
For $\phi\in V_{\mathbb{H}}^{R},$
\begin{eqnarray*}
\left\| \phi-L(\phi)\right\| &=&\left\|\sum_{i=1}^{n}\int_{\mathbb{H}} \eta^i_q\left\langle \phi|\overline{ \eta}^i_q\right\rangle d\mu(q) -\sum_{i=1}^{n}\int_{\mathbb{H}} \Psi^i_q\left\langle \phi|\overline{ \eta}^i_q\right\rangle d\mu(q) \right\|\\
&=&	\left\|\sum_{i=1}^{n}\int_{\mathbb{H}}(\eta^i_q-\Psi^i_q)
\left\langle \phi|\overline{ \eta}^i_q\right\rangle d\mu(q) \right\|\\
&\leq&\sum_{i=1}^{n}\int_{\mathbb{H}}\left\|(\eta^i_q-\Psi^i_q)
\left\langle \phi|\overline{ \eta}^i_q\right\rangle  \right\|d\mu(q)\\
&=&\sum_{i=1}^{n}\int_{\mathbb{H}}\left\|\eta^i_q-\Psi^i_q\right\|\left|\left\langle \phi|\overline{ \eta}^i_q\right\rangle  \right| d\mu(q)\\
&\leq&\sum_{i=1}^{n}\int_{\mathbb{H}}\left\|\eta^i_q-\Psi^i_q\right\|\left\| \phi\right\| \left\| \overline{ \eta}^i_q\right\| d\mu(q)\\
&=&\gamma\left\| \phi\right\|. 
\end{eqnarray*}
That is $\left\| \phi-L(\phi)\right\|\leq \gamma\left\| \phi\right\|,$ for all $\phi\in V_{\mathbb{H}}^{R}.$ It follows that $\left\| I_{V_{\mathbb{H}}^{R}}-L\right\| \leq \gamma$ and $\left\| I_{V_{\mathbb{H}}^{R}}-L\right\| \leq 1.$
So that $\left\| L\right\| \leq 1+\gamma$ and $\left\| L^{-1}\right\|\leq\displaystyle\frac{1}{1-\gamma}. $
Each $\phi\in V_{\mathbb{H}}^{R}$ can be written as 
\begin{eqnarray*}
\phi&=&LL^{-1}\phi\\
	&=&\sum_{i=1}^{n}\int_{\mathbb{H}}\Psi^i_q \left\langle L^{-1}\phi|\overline{ \eta}^i_q\right\rangle d\mu(q). 
\end{eqnarray*}
Now
\begin{eqnarray*}
\left\|\phi\right\|^{2} &=&\left\langle \phi|\phi\right\rangle \\
&=&\left\langle \phi\vert\sum_{i=1}^{n}\int_{\mathbb{H}}\Psi^i_q \left\langle L^{-1}\phi|\overline{ \eta}^i_q\right\rangle d\mu(q) \right\rangle \\
&=&\sum_{i=1}^{n}\int_{\mathbb{H}}\left\langle \phi|\Psi^i_q\right\rangle\left\langle L^{-1}\phi|\overline{ \eta}^i_q\right\rangle d\mu(q)\\
&\leq& \left( \sum_{i=1}^{n}\int_{\mathbb{H}}\left| \left\langle \phi|\Psi_{q}^{i}\right\rangle \right|^{2} d\mu(q)\right)^{\frac{1}{2}} \left( \sum_{i=1}^{n}\int_{\mathbb{H}}\left| \left\langle L^{-1}\phi|\overline{ \eta}_{q}^{i}\right\rangle \right|^{2} d\mu(q)\right)^{\frac{1}{2}}\\
&\leq&  \left( \sum_{i=1}^{n}\int_{\mathbb{H}}\left| \left\langle \phi|\Psi_{q}^{i}\right\rangle \right|^{2} d\mu(q)\right)^{\frac{1}{2}} \left(D. \left\| L^{-1}\phi\right\| ^{2}\right)^{\frac{1}{2}}\\
&=& \sqrt{D}\left\| L^{-1}\phi\right\| \left( \sum_{i=1}^{n}\int_{\mathbb{H}}\left| \left\langle \phi|\Psi_{q}^{i}\right\rangle \right|^{2} d\mu(q)\right)^{\frac{1}{2}}\\
&\leq&\sqrt{D}\frac{1}{1-\gamma}\left\| \phi\right\| \left( \sum_{i=1}^{n}\int_{\mathbb{H}}\left| \left\langle \phi|\Psi_{q}^{i}\right\rangle \right|^{2} d\mu(q)\right)^{\frac{1}{2}}.
\end{eqnarray*}
It follows that 
\begin{equation}\label{fr2}
\sum_{i=1}^{n}\int_{\mathbb{H}}\left| \left\langle \phi|\Psi_{q}^{i}\right\rangle \right|^{2} d\mu(q)\geq\frac{(1-\gamma)^{2}}{D}\left\| \phi\right\| ^{2}.
\end{equation}
From \ref{fr1} and \ref{fr2}, we get
\begin{equation}
\frac{(1-\gamma)^{2}}{D}\left\| \phi\right\| ^{2}\leq\sum_{i=1}^{n}\int_{\mathbb{H}}\left| \left\langle \phi|\Psi_{q}^{i}\right\rangle \right|^{2} d\mu(q)\leq M(A)\left(1+\sqrt{\frac{\lambda}{M(A)}} \right)^{2} \left\| \phi\right\|^{2},
\end{equation}
for all $\phi\in V_{\mathbb{H}}^{R}.$
Hence $\{\Psi^i_q\in V_\mathbb{H}^R~|~i=1,2,\cdots, n,~q\in \mathbb{H}\}$ is a rank $n$ continuous frame for $V_\mathbb{H}^R$ with bounds $\displaystyle\frac{(1-\gamma)^{2}}{D}$ and $\displaystyle M(A)\left(1+\sqrt{\frac{\lambda}{M(A)}} \right)^{2}.$
\end{proof}
\begin{definition}
Let $K$ and $L$ be subspaces of $V_{\quat}^{R}.$ When $K\neq\{0\},$ the gap from $K$ to $L$ is given by \begin{equation*}
\delta(K,L):=\sup_{\phi\in K,\|\phi \|=1}\inf_{\psi\in L}\left\| \phi-\psi\right\|. 
\end{equation*}
Also when $K=\{0\},$ we define $\delta(K,L)=0.$ 
\end{definition}
\begin{theorem}
Let $\{\eta^i_q\in V_\mathbb{H}^R~|~i=1,2,\cdots, n,~q\in \mathbb{H}\}$ be a rank $n$ continuous frame in $V_{\quat}^{R}$ with bounds $m(A)$ and $M(A)$ and let 	$\{\overline{\eta}^i_q\in V_\mathbb{H}^R~|~i=1,2,\cdots, n,~q\in \mathbb{H}\}$ be the dual frame of $\{\eta^i_q\in V_\mathbb{H}^R~|~i=1,2,\cdots, n,~q\in \mathbb{H}\}$ with bounds $C,D.$ Suppose that $\{\Psi^i_q\in V_\mathbb{H}^R~|~i=1,2,\cdots, n,~q\in \mathbb{H}\}$ is a family in $V_{\quat}^{R}.$ Let $K=\overline{\mbox{rightspan}}\{\Psi^i_q\}_{i=1}^{n}, L=\overline{\mbox{rightspan}}\{\eta^i_q\}_{i=1}^{n}$, where $q\in\quat$ and the right span is taken over $\quat$.  Assume that $\delta(K,L)<1.$ If $\{\Psi^i_q\in V_\mathbb{H}^R~|~i=1,2,\cdots, n,~q\in \mathbb{H}\}$ satisfies the following conditions:
\begin{enumerate}
	\item $\lambda:=\displaystyle\sum_{i=1}^{n}\int_{\mathbb{H}}\left\|\eta^i_q-\Psi^i_q \right\|^{2}d\mu(q)<\infty; $
	\item $\gamma:=\displaystyle\sum_{i=1}^{n}\int_{\mathbb{H}}\left\|\eta^i_q-\Psi^i_q \right\|\left\|\overline{\eta}^i_q\right\|d\mu(q)<1. $
\end{enumerate}
Then $\{\Psi^i_q\in V_\mathbb{H}^R~|~i=1,2,\cdots, n,~q\in \mathbb{H}\}$ is a rank $n$ continuous frame  with bounds $\displaystyle\frac{(1-\gamma)^{2}}{D}$ and $\displaystyle M(A)\left(1+\sqrt{\frac{\lambda}{M(A)}} \right)^{2}\frac{1}{(1-\delta(K,L))^{2}}.$ Moreover, the restriction of the orthogonal projection $P_{L}$ to $K$ is an isomorphism from $K$ onto $L.$   
\end{theorem}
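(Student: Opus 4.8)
The plan is to read $\{\eta^i_q\}$ as a frame for the closed right subspace $L=\overline{\mathrm{rightspan}}\{\eta^i_q\}$, so that the frame inequalities, the dual family $\{\overline\eta^i_q\}\subset L$, and the reconstruction $\psi=\sum_{i=1}^n\int_{\mathbb{H}}\eta^i_q\langle\psi|\overline\eta^i_q\rangle\,d\mu(q)$ all hold for $\psi\in L$; the whole argument then mirrors Theorem \ref{MT} with the ambient space replaced by the pair $L,K$. First I would record the elementary consequence of the gap hypothesis. Since $\delta(K,L)=\sup_{\phi\in K,\,\|\phi\|=1}\inf_{\psi\in L}\|\phi-\psi\|=\sup_{\phi\in K,\,\|\phi\|=1}\|(I_{V_\mathbb{H}^R}-P_L)\phi\|$, every $\phi\in K$ satisfies $\|(I_{V_\mathbb{H}^R}-P_L)\phi\|\le\delta(K,L)\|\phi\|$, hence $\|P_L\phi\|\ge(1-\delta(K,L))\|\phi\|$. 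This makes $P_L|_K$ bounded below, injective, with closed range in $L$ and $\|(P_L|_K)^{-1}\|\le\frac{1}{1-\delta(K,L)}$.

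The crux, which I expect to be the main obstacle since everything else mimics Theorem \ref{MT}, is surjectivity of $P_L|_K$ onto $L$. I would argue as follows. Suppose $\psi\in L$ is orthogonal to $P_L(K)$. Using self-adjointness of $P_L$ and $P_L\psi=\psi$, for every $\phi\in K$ we obtain $0=\langle\psi|P_L\phi\rangle=\langle\psi|\phi\rangle$, so $\psi\perp K$; in particular $\langle\psi|\Psi^i_q\rangle=0$ for all $i,q$. Setting $\Theta\psi:=\sum_{i=1}^n\int_{\mathbb{H}}\Psi^i_q\langle\psi|\overline\eta^i_q\rangle\,d\mu(q)\in K$ and invoking the reconstruction on $L$, one finds $\psi-\Theta\psi=\sum_{i=1}^n\int_{\mathbb{H}}(\eta^i_q-\Psi^i_q)\langle\psi|\overline\eta^i_q\rangle\,d\mu(q)$, whence $\|\psi-\Theta\psi\|\le\gamma\|\psi\|$ by condition (2) and Cauchy--Schwarz. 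Since $\Theta\psi\in K$ is orthogonal to $\psi$, this gives $\|\psi\|^2=\langle\psi|\psi-\Theta\psi\rangle\le\gamma\|\psi\|^2$ with $\gamma<1$, forcing $\psi=0$. Thus $P_L(K)$ is dense in $L$, and being closed it equals $L$, so $P_L|_K$ is an isomorphism of $K$ onto $L$. The same estimate $\|\psi-\Theta\psi\|\le\gamma\|\psi\|$ for all $\psi\in L$ shows that $\Theta|_L\colon L\to K$ is bounded below by $1-\gamma$ and that $P_L\Theta|_L$ differs from the identity on $L$ by an operator of norm $\le\gamma<1$; together with injectivity of $P_L|_K$ this forces $\Theta|_L$ to map $L$ bijectively onto $K$, with $\|(\Theta|_L)^{-1}\|\le\frac{1}{1-\gamma}$.

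Granting the isomorphism, the two frame bounds follow exactly as in Theorem \ref{MT}. For the lower bound, each $\phi\in K$ is written $\phi=\Theta\psi$ with $\psi\in L$ and $\|\psi\|\le\frac{1}{1-\gamma}\|\phi\|$; then $\|\phi\|^2=\langle\phi|\Theta\psi\rangle=\sum_{i=1}^n\int_{\mathbb{H}}\langle\phi|\Psi^i_q\rangle\langle\psi|\overline\eta^i_q\rangle\,d\mu(q)$, and Cauchy--Schwarz with the dual upper bound $D$ on $L$ gives $\|\phi\|^2\le\sqrt D\,\|\psi\|\big(\sum_{i=1}^n\int_{\mathbb{H}}|\langle\phi|\Psi^i_q\rangle|^2\,d\mu(q)\big)^{1/2}$, i.e. $\sum_{i=1}^n\int_{\mathbb{H}}|\langle\phi|\Psi^i_q\rangle|^2\,d\mu(q)\ge\frac{(1-\gamma)^2}{D}\|\phi\|^2$, just as for (\ref{fr2}). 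For the upper bound, condition (1) and Theorem \ref{SH} applied to the synthesis operator $\{c_i\}\mapsto\sum_{i=1}^n\int_{\mathbb{H}}\Psi^i_q c_i\,d\mu(q)$, split as $\Psi^i_q=\eta^i_q+(\Psi^i_q-\eta^i_q)$, yield the Bessel estimate $\sum_{i=1}^n\int_{\mathbb{H}}|\langle\phi|\Psi^i_q\rangle|^2\,d\mu(q)\le M(A)\big(1+\sqrt{\lambda/M(A)}\big)^2\|\phi\|^2$, exactly as for (\ref{fr1}); since $\frac{1}{(1-\delta(K,L))^2}\ge1$, the stated upper bound follows a fortiori (and is precisely what one gets if the estimate is instead routed through $\|\phi\|\le\frac{1}{1-\delta(K,L)}\|P_L\phi\|$ on $K$). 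Combining the two inequalities gives the asserted frame bounds for $\{\Psi^i_q\}$ on $K$, and the isomorphism statement has already been established. Throughout, the quaternionic bookkeeping — right-linearity, keeping the scalars $\langle\psi|\overline\eta^i_q\rangle$ to the right, and self-adjointness of $P_L$ — must be respected, but it is routine.
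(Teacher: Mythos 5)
Your proposal is correct, but it follows a genuinely different route from the paper's. The paper first projects the perturbed family onto $L$: it shows $\|P_L h\|\geq(1-\delta(K,L))\|h\|$ on $K$ (as you do), then argues $\|\eta^i_q-P_L\Psi^i_q\|\leq\|\eta^i_q-\Psi^i_q\|$ so that $\{P_L\Psi^i_q\}$ satisfies hypotheses (1) and (2) relative to the frame $\{\eta^i_q\}$ of $L$, invokes Theorem \ref{MT} inside $L$ to conclude that $\{P_L\Psi^i_q\}$ is a frame for $L$ with bounds $(1-\gamma)^2/D$ and $M(A)\left(1+\sqrt{\lambda/M(A)}\right)^2$, asserts $P_L(K)=L$, and finally transports both bounds back to $K$ through $Q=P_L|_K$ and its adjoint --- which is exactly where the factor $(1-\delta(K,L))^{-2}$ enters the upper bound. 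You never project the family: you work directly with the operator $\Theta:L\to K$, $\Theta\psi=\sum_{i=1}^n\int_{\quat}\Psi^i_q\langle\psi|\overline{\eta}^i_q\rangle\,d\mu(q)$, prove $\|\psi-\Theta\psi\|\leq\gamma\|\psi\|$ on $L$, and obtain surjectivity of $P_L|_K$ either by the orthogonal-complement argument or by observing that $P_L\Theta|_L$ is a norm-$\gamma$ perturbation of the identity on $L$. This is a genuine improvement on one point: the paper simply states ``$P_L(K)=L$'' without justification (it does follow from the frame property of $\{P_L\Psi^i_q\}$ for $L$ together with closedness of $P_L(K)$, but this is left implicit), whereas you give a complete proof of the isomorphism claim, which is the only substantively new content beyond Theorem \ref{MT}. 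Your frame bounds are then obtained more directly: the lower bound by writing $\phi=\Theta\psi$ with $\|\psi\|\leq(1-\gamma)^{-1}\|\phi\|$, mirroring (\ref{fr2}), and the upper bound as the global Bessel estimate of (\ref{fr1}) --- which, as you note, actually shows the factor $(1-\delta(K,L))^{-2}$ in the stated upper bound is superfluous; the paper's detour through $\|(Q^{-1})^\dagger\|$ is what produces that extra factor. In short, both arguments are sound; the paper's is more modular (it reuses Theorem \ref{MT} as a black box on $L$), while yours is more self-contained, fills the surjectivity gap, and yields a sharper upper bound.
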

\begin{proof}
Let $h\in K$ then $h=h_{L}+h-h_{L},$ where $h_{L}\in L$ with $P_{L}h=h_{L},$ $h=P_{L}h+h-h_{L}.$ Therefore
\begin{eqnarray*}
\left\|P_{L}h \right\|&\geq&\left\| h\right\|-\left\|h-h_{L} \right\|\\
&=&\left\| h\right\|-\left\| h\right\|\left\| \frac{h}{\left\| h\right\|}-\frac{h_{L}}{\left\| h\right\|}\right\| \\
&\geq & \left\| h\right\|-\left\| h\right\|\sup_{\phi\in K, \|\phi\|=1}\inf_{\psi\in L}\|\phi-\psi\|\\
&=&\left\| h\right\| -\left\| h\right\| \delta(K,L)\\
&=&(1-\delta(K,L))\left\| h\right\|. 
\end{eqnarray*}
Hence $\left\|P_{L}h \right\|\geq(1-\delta(K,L))\left\| h\right\|,$ for all $h\in K.$ Let $P_{L}(\Psi^i_q)=\Psi^i_{qL}$ then $\eta^i_q-\Psi^i_q=\eta^i_q-\Psi^i_{qL}+(\Psi^i_{qL}-\Psi^i_q).$ It follows that 
\begin{eqnarray*}
\left\| \eta^i_q-\Psi^i_q\right\| &\geq&\left\| \eta^i_q-\Psi^i_{qL}\right\|-\left\|\Psi^i_q-\Psi^i_{qL} \right\|\\
&\geq&\left\| \eta^i_q-\Psi^i_{qL}\right\|-\left\|\Psi^i_q \right\|\delta(K,L) \\
&\geq& \left\| \eta^i_q-\Psi^i_{qL}\right\|-(\left\|P_{L}(\Psi^i_q) \right\| -\left\| \Psi^i_q\right\| ) \\
&\geq&\left\| \eta^i_q-\Psi^i_{qL}\right\|\mbox{~as~} \left\|P_{L}(\Psi^i_q) \right\| -\left\| \Psi^i_q\right\|\leq 0.
\end{eqnarray*}
Therefore $\left\| \eta^i_q-P_{L}(\Psi^i_q)\right\|\leq \left\| \eta^i_q-\Psi^i_q\right\|.$ Hence,
 $$\displaystyle\sum_{i=1}^{n}\int_{\mathbb{H}}\left\| \eta^i_q-P_{L}(\Psi^i_q)\right\|^{2}d\mu(q)\leq \displaystyle\sum_{i=1}^{n}\int_{\mathbb{H}}\left\| \eta^i_q-\Psi^i_q\right\|^{2}d\mu(q) $$ and $$\displaystyle\sum_{i=1}^{n}\int_{\mathbb{H}}\left\| \eta^i_q-P_{L}(\Psi^i_q)\right\|\left\|\overline{\eta}^i_q \right\|d\mu(q)\leq \displaystyle\sum_{i=1}^{n}\int_{\mathbb{H}}\left\| \eta^i_q-\Psi^i_q\right\|\left\|\overline{\eta}^i_q  \right\|d\mu(q).$$ We apply theorem \ref{MT} to the sequence $\{P_{L}(\Psi^i_q)\}_{i=1}^{n}$ in $L$ and to the frame $\{\eta^i_q\}_{i=1}^{n}$ for $L$ to obtain $\{P_{L}(\Psi^i_q)\}_{i=1}^{n}$ as a frame for $L$ with bounds $\displaystyle\frac{(1-\gamma)^{2}}{D}$ and $\displaystyle M(A)\left(1+\sqrt{\frac{\lambda}{M(A)}} \right)^{2}.$ We have $P_{L}(K)=L$ and hence the restriction $Q:=P_{L}|_{K}$ is an isomorphism from $K$ onto $L.$  Now the claim is $\{\Psi^i_q\in V_\mathbb{H}^R~|~i=1,2,\cdots, n,~q\in \mathbb{H}\}$ is a frame for $K.$ For $\Psi\in K,$ we have
\begin{eqnarray*}
\sum_{i=1}^{n}\int_{\mathbb{H}}\left|\left\langle \Psi|\Psi^i_{q}\right\rangle  \right|^{2}d\mu(q) &=&\sum_{i=1}^{n}\int_{\mathbb{H}}\left|\left\langle \Psi|Q^{-1}Q(\Psi^i_{q})\right\rangle  \right|^{2}d\mu(q)\\
&=&\sum_{i=1}^{n}\int_{\mathbb{H}}\left|\left\langle(Q^{-1})^{\dagger} \Psi|Q(\Psi^i_{q})\right\rangle  \right|^{2}d\mu(q)\\
&\leq&M(A)\left(1+\sqrt{\frac{\lambda}{M(A)}} \right)^{2}\left\|(Q^{-1})^{\dagger} \Psi \right\|^{2} \\
&\leq&M(A)\left(1+\sqrt{\frac{\lambda}{M(A)}} \right)^{2}\left\lbrace \left\| (Q^{-1})^{\dagger}\right\|\left\|\Psi \right\| \right\rbrace ^{2}\\
&=&M(A)\left(1+\sqrt{\frac{\lambda}{M(A)}} \right)^{2}\left\| Q^{-1}\right\|^{2} \left\| \Psi\right\|^{2} \\
&\leq&M(A)\left(1+\sqrt{\frac{\lambda}{M(A)}} \right)^{2}\frac{1}{(1-\delta(K,L))^{2}}\left\| \Psi\right\|^{2} 
\end{eqnarray*}
Hence 
\begin{equation}
\sum_{i=1}^{n}\int_{\mathbb{H}}\left|\left\langle \Psi|\Psi^i_{q}\right\rangle  \right|^{2}d\mu(q)\leq M(A)\left(1+\sqrt{\frac{\lambda}{M(A)}} \right)^{2}\frac{1}{(1-\delta(K,L))^{2}}\left\| \Psi\right\|^{2} ,
\end{equation}
for all $\Psi\in K.$
Now 
\begin{eqnarray*}
\sum_{i=1}^{n}\int_{\mathbb{H}}\left|\left\langle \Psi|\Psi^i_{q}\right\rangle  \right|^{2}d\mu(q) &=&\sum_{i=1}^{n}\int_{\mathbb{H}}\left|\left\langle (Q^{-1})^\dagger\Psi|Q(\Psi^i_{q})\right\rangle  \right|^{2}d\mu(q)\\
&\geq&\frac{(1-\gamma)^{2}}{D}\left\| (Q^{-1})^\dagger\Psi\right\| ^{2}\\
&=&\frac{(1-\gamma)^{2}}{D}\left\| (Q^\dagger)^{-1}\Psi\right\| ^{2}\\
&\geq&\frac{(1-\gamma)^{2}}{D}\left\| \Psi\right\| ^{2}.
\end{eqnarray*}	
Hence 
\begin{equation}
\sum_{i=1}^{n}\int_{\mathbb{H}}\left|\left\langle \Psi|\Psi^i_{q}\right\rangle  \right|^{2}d\mu(q)\geq\frac{(1-\gamma)^{2}}{D}\left\| \Psi\right\| ^{2},
\end{equation}
for all $\Psi\in K.$ Therefore, $\{\Psi^i_q\in V_\mathbb{H}^R~|~i=1,2,\cdots, n,~q\in \mathbb{H}\}$ is a rank $n$ continuous frame for $K$ with bounds $\displaystyle\frac{(1-\gamma)^{2}}{D}$ and $\displaystyle M(A)\left(1+\sqrt{\frac{\lambda}{M(A)}} \right)^{2}\frac{1}{(1-\delta(K,L))^{2}}.$
\end{proof}
\begin{definition}
	We call a sequence $\{\eta^i_q\in V_\mathbb{H}^R~|~i=1,2,\cdots, n,~q\in \mathbb{H}\}$ a quaternionic rank $n$ continuous Riesz family if there exists two constants $A,B>0$ such that for every  scalar sequence $\{c_{i}\}_{i=1}^{n}\subseteq\quat^n,$ 
	\begin{equation*}
	A\sum_{i=1}^{n}|c_{i}|^{2}\leq\left\|\sum_{i=1}^{n}\int_{\mathbb{H}}\eta^i_q c_{i} d\mu(q)\right\|^{2}\leq B\sum_{i=1}^{n}|c_{i}|^{2},
	\end{equation*}
	where $A,B$ are called Riesz bounds. 
\end{definition}
\begin{theorem}
	Let $\{\eta^i_q\in V_\mathbb{H}^R~|~i=1,2,\cdots, n,~q\in \mathbb{H}\}$ be a quaternionic rank $n$ continuous Riesz family in $V_{\quat}^{R}$ with bounds $m(A)$ and $M(A)$ and let $\{\xi^i_q\in V_\mathbb{H}^R~|~i=1,2,\cdots, n,~q\in \mathbb{H}\}$ be a family in $V_\mathbb{H}^R$ which satisfies $\gamma=\displaystyle\sum_{i=1}^{n}\int_{\mathbb{H}}\left\|\eta^i_q-\xi^i_q\right\| \left\| S^{-1}\eta^i_q\right\| d\mu(q)<1. $ Then $\{\xi^i_q\in V_\mathbb{H}^R~|~i=1,2,\cdots, n,~q\in \mathbb{H}\}$  is a rank $n$ continuous Riesz family with bounds $m(A)(1-\gamma^{2})$ and  $M(A)\left(1+\sqrt{\displaystyle\frac{\lambda}{M(A)}} \right)^{2} $, where $\lambda:=\displaystyle\sum_{i=1}^{n}\int_{\mathbb{H}}\left\|\eta^i_q-\xi^i_q\right\|^{2}d\mu(q)$ and $S$ is a frame operator of  $\{\eta^i_q\in V_\mathbb{H}^R~|~i=1,2,\cdots, n,~q\in \mathbb{H}\}$ in $L:=\overline{ rightspan}\{\eta^i_q~|~i=1,2,..,n;~q\in\quat\}.$
\end{theorem}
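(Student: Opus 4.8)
The plan is to mirror the proof of Theorem \ref{MT}, exploiting that for a Riesz family $\{\eta^i_q\}$ with frame operator $S$ on $L=\overline{\mathrm{rightspan}}\{\eta^i_q\}$, the canonical dual frame is precisely $\{S^{-1}\eta^i_q\}$. Thus the factor $\|S^{-1}\eta^i_q\|$ in $\gamma$ plays exactly the role of the dual-frame norm $\|\overline{\eta}^i_q\|$ of Theorem \ref{MT}, with upper dual bound $D=1/m(A)$; this is what will ultimately match the lower constant, which I expect to be $(1-\gamma)^2/D=m(A)(1-\gamma)^2$ (the statement's $m(A)(1-\gamma^2)$ I read as a typo for $m(A)(1-\gamma)^2$).

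First I would dispose of the upper Riesz bound directly through the synthesis operator $U:\quat^n\to V_\quat^R$, $U\{c_i\}=\sum_i\int_\quat\xi^i_q c_i\,d\mu(q)$. Splitting $\xi^i_q=(\xi^i_q-\eta^i_q)+\eta^i_q$, applying the triangle inequality and the Riesz upper bound of $\{\eta^i_q\}$ gives
$$\|U\{c_i\}\| \le \sum_i |c_i|\int_\quat\|\xi^i_q-\eta^i_q\|\,d\mu(q) + \sqrt{M(A)}\,\|\{c_i\}\| \le \left(\sqrt{\lambda}+\sqrt{M(A)}\right)\|\{c_i\}\|,$$
exactly as in Theorem \ref{MT}; squaring yields the claimed upper bound $M(A)\left(1+\sqrt{\lambda/M(A)}\right)^2$. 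For the lower bound I would introduce the reconstruction operator $\Phi:L\to V_\quat^R$, $\Phi\phi=\sum_i\int_\quat\xi^i_q\langle\phi\mid S^{-1}\eta^i_q\rangle\,d\mu(q)$. Using the frame decomposition of Theorem \ref{FD} inside $L$, namely $\phi=\sum_i\int_\quat\eta^i_q\langle\phi\mid S^{-1}\eta^i_q\rangle\,d\mu(q)$, and estimating termwise by Cauchy--Schwarz,
$$\|\phi-\Phi\phi\| \le \sum_i\int_\quat\|\eta^i_q-\xi^i_q\|\,\left|\langle\phi\mid S^{-1}\eta^i_q\rangle\right|\,d\mu(q) \le \gamma\,\|\phi\|,$$
so that $\|I_{V_\quat^R}-\Phi\|\le\gamma<1$ on $L$ and $\Phi$ is invertible there with $\|\Phi^{-1}\|\le 1/(1-\gamma)$. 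Writing $\phi=\Phi\Phi^{-1}\phi$ and running the Cauchy--Schwarz reconstruction step of Theorem \ref{MT} then produces a lower estimate with constant $(1-\gamma)^2/D=m(A)(1-\gamma)^2$.

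The hard part, I expect, is reconciling the two notions of ``lower bound.'' The Riesz condition is phrased through the finite synthesis operator $U$ on $\quat^n$, whereas $\Phi$ is a genuine integral operator in the variable $q$; since the coefficient $\langle\phi\mid S^{-1}\eta^i_q\rangle$ depends on $q$ while the $c_i$ do not, one has $\Phi\circ T\neq U$ for the synthesis operator $T\{c_i\}=\sum_i\int_\quat\eta^i_q c_i\,d\mu(q)$ of $\{\eta^i_q\}$. Consequently the invertibility of $\Phi$ yields, verbatim, a \emph{frame}-type lower estimate $\sum_i\int_\quat|\langle\phi\mid\xi^i_q\rangle|^2\,d\mu(q)\ge m(A)(1-\gamma)^2\|\phi\|^2$ on $L$, rather than the synthesis bound $\|U\{c_i\}\|^2\ge m(A)(1-\gamma)^2\sum_i|c_i|^2$ demanded by the Riesz definition. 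To close this gap I would invoke the fact that a continuous frame for $L$ whose synthesis operator is injective and bounded below is automatically a Riesz family with coinciding frame and Riesz bounds, so the task reduces to deriving bounded-belowness of $U$ (equivalently, $\quat$-independence of $\{\xi^i_q\}$) from the invertibility of $\Phi$; this is the single point that genuinely needs care. I would also check the finiteness of $\int_\quat\|\eta^i_q-\xi^i_q\|\,d\mu(q)$ used in the upper-bound display, which tacitly requires $\mu(\quat)<\infty$ (or the Cauchy--Schwarz passage $(\int_\quat\|\cdot\|\,d\mu)^2\le\mu(\quat)\int_\quat\|\cdot\|^2\,d\mu$) to pass from the pointwise norms to $\lambda$.
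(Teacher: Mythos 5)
Your upper-bound argument and your operator $\Phi$ coincide with the paper's: the paper also splits $\xi^i_q=(\xi^i_q-\eta^i_q)+\eta^i_q$ to get the upper Riesz bound $M(A)\bigl(1+\sqrt{\lambda/M(A)}\bigr)^2$, also defines $U\phi=\sum_{i}\int_{\mathbb{H}}\xi^i_q\langle\phi|S^{-1}\eta^i_q\rangle\,d\mu(q)$, and also derives $\|\phi-U\phi\|\le\gamma\|\phi\|$, hence $\|U\phi\|\ge(1-\gamma)\|\phi\|$ on $L$. Your reading of $m(A)(1-\gamma^{2})$ as a typo for $m(A)(1-\gamma)^{2}$ is consistent with what the paper actually proves, and your worry about passing from $\int_{\mathbb{H}}\|\eta^i_q-\xi^i_q\|\,d\mu(q)$ to $\sqrt{\lambda}$ applies equally to the paper's own display.

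Where you stop, however, is exactly where the proof has to be finished, and you leave that step unresolved. The paper closes the gap you identify by asserting, from the frame decomposition of the Riesz family on $L$, the biorthogonality-type identity $U(\eta^i_q)=\xi^i_q$ for all $i$ and $q$ (the continuous analogue of $\langle f_j|S^{-1}f_k\rangle=\delta_{jk}$ for a discrete Riesz sequence). Granting this, right-linearity and continuity of $U$ give precisely the factorization you claim cannot hold: $\sum_i\int_{\mathbb{H}}\xi^i_qc_i\,d\mu(q)=U\bigl(\sum_i\int_{\mathbb{H}}\eta^i_qc_i\,d\mu(q)\bigr)$, whence $\bigl\|\sum_i\int_{\mathbb{H}}\xi^i_qc_i\,d\mu(q)\bigr\|\ge(1-\gamma)\sqrt{m(A)}\bigl(\sum_i|c_i|^2\bigr)^{1/2}$ directly, with no detour through frame-type lower bounds. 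Your alternative route --- deduce a frame lower bound $\sum_i\int_{\mathbb{H}}|\langle\phi|\xi^i_q\rangle|^2d\mu(q)\ge m(A)(1-\gamma)^2\|\phi\|^2$ and then upgrade it to a Riesz bound by showing the synthesis operator of $\{\xi^i_q\}$ is bounded below --- is circular as stated, because bounded-belowness of that synthesis operator \emph{is} the lower Riesz condition you are trying to prove; you give no mechanism for extracting it from the invertibility of $\Phi$. So the proposal is incomplete: the one genuinely load-bearing idea, $U(\eta^i_q)=\xi^i_q$ (itself only heuristically justified in the continuous setting, but it is the paper's route), is missing, and without it your argument does not reach the lower Riesz bound.
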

\begin{proof}
	For $\{c_{i}\}_{i=1}^{n}\in\quat^{n},$
	\begin{eqnarray*}
		\left\|\sum_{i=1}^{n}\int_{\mathbb{H}} \xi^i_q c_{i} d\mu(q)\right\|&\leq&	
		\sum_{i=1}^{n}\int_{\mathbb{H}}\left\| \xi^i_q c_{i}\right\| d\mu(q)\\&=&\sum_{i=1}^{n}\int_{\mathbb{H}}\left\| (\xi^i_q-\eta^i_q+\eta^i_q)c_i\right\| d\mu(q)\\
		&\leq&\sum_{i=1}^{n}\int_{\mathbb{H}}\left\|(\eta^i_q-\xi^i_q)c_i \right\|d\mu(q)+\sum_{i=1}^{n}\int_{\mathbb{H}}\left\| \eta^i_qc_i \right\|d\mu(q)\\
		&=&  \sum_{i=1}^{n}\left|c_{i}\right|  \int_{\mathbb{H}}\left\|(\eta^i_q-\xi^i_q) \right\|d\mu(q)+\sum_{i=1}^{n}\int_{\mathbb{H}}\left\| \eta^i_qc_i \right\|d\mu(q)\\
		&\leq&\left( \sum_{i=1}^{n}\left|c_{i}\right|^{2}\right)^{\frac{1}{2}} \left(\sum_{i=1}^{n}\int_{\mathbb{H}}\left\|(\eta^i_q-\xi^i_q) \right\|^{2}d\mu(q) \right)^{\frac{1}{2}}+ \sqrt{M(A)}\left( \sum_{i=1}^{n}\left|c_{i}\right|^{2}\right)^{\frac{1}{2}}\\
		&=&\sqrt{\lambda}\left( \sum_{i=1}^{n}\left|c_{i}\right|^{2}\right)^{\frac{1}{2}}+\sqrt{M(A)}\left( \sum_{i=1}^{n}\left|c_{i}\right|^{2}\right)^{\frac{1}{2}}\\
		&=&\sqrt{M(A)}\left(1+\sqrt{\frac{\lambda}{M(A)}} \right) \left( \sum_{i=1}^{n}\left|c_{i}\right|^{2}\right)^{\frac{1}{2}}.
	\end{eqnarray*}
	Define $U:V_{\quat}^{R}\longrightarrow V_{\quat}^{R},\mbox{~by~} U\phi=\displaystyle\sum_{i=1}^{n}\int_{\mathbb{H}} \xi^i_q \left\langle \phi|S^{-1}\eta^i_q\right\rangle d\mu(q).$ For any $\phi\in V_\quat^R,$ we have
	\begin{eqnarray*}
		\left\| \sum_{i=1}^{n}\int_{\mathbb{H}} \xi^i_q \left\langle \phi|S^{-1}\eta^i_q\right\rangle d\mu(q)\right\| &\leq& \sum_{i=1}^{n}\int_{\mathbb{H}}\left\|  \xi^i_q \left\langle \phi|S^{-1}\eta^i_q\right\rangle \right\| d\mu(q)\\
		&\leq&(\sqrt{\lambda}+\sqrt{M(A)})\left(\sum_{i=1}^{n}\int_{\mathbb{H}}\left|\left\langle \phi|S^{-1}\eta^i_q\right\rangle \right|^{2}  \right)^{\frac{1}{2}} 
		\end{eqnarray*}
		\begin{eqnarray*}
		&=&(\sqrt{\lambda}+\sqrt{M(A)})\left(\sum_{i=1}^{n}\int_{\mathbb{H}}\left|\left\langle P_{L}\phi|S^{-1}\eta^i_q\right\rangle \right|^{2}  \right)^{\frac{1}{2}} \\
		&\leq&(\sqrt{\lambda}+\sqrt{M(A)})\left( \frac{1}{m(A)}\left\| P_{L}\phi\right\|^{2} \right)^{\frac{1}{2}} \\
		&\leq&\frac{\sqrt{\lambda}+\sqrt{M(A)}}{\sqrt{m(A)}}\left\| \phi\right\|. 
	\end{eqnarray*}
	Since $\{\eta^i_q\in V_\mathbb{H}^R~|~i=1,2,\cdots, n,~q\in \mathbb{H}\}$ is a continuous frame for $L$, by the frame decomposition, $U(\eta^i_q)=\xi^i_q,$ for all $i=1,2,..,n$ and $q\in\quat$.
	For $\phi\in L,$
	\begin{eqnarray*}
		\left\| \phi-U\phi\right\| &=&\left\|\sum_{i=1}^{n}\int_{\mathbb{H}} \eta^i_q \left\langle \phi|S^{-1}\eta^i_q\right\rangle d\mu(q)-\sum_{i=1}^{n}\int_{\mathbb{H}} \xi^i_q \left\langle \phi|S^{-1}\eta^i_q\right\rangle d\mu(q) \right\| \\
		&=&\left\| \sum_{i=1}^{n}\int_{\mathbb{H}}(\eta^i_q -\xi^i_q )\left\langle \phi|S^{-1}\eta^i_q\right\rangle d\mu(q)\right\| \\
		&\leq& \sum_{i=1}^{n}\int_{\mathbb{H}}\left\|(\eta^i_q -\xi^i_q )\left\langle \phi|S^{-1}\eta^i_q\right\rangle\right\| d\mu(q) \\
		&=&\sum_{i=1}^{n}\int_{\mathbb{H}}\left\|\eta^i_q -\xi^i_q \right\| \left|\left\langle \phi|S^{-1}\eta^i_q\right\rangle \right|d\mu(q) \\
		&\leq&\sum_{i=1}^{n}\int_{\mathbb{H}}\left\|\eta^i_q -\xi^i_q \right\| \left\| S^{-1}\eta^i_q\right\| \left\| \phi\right\|d\mu(q)\\
		&=& \gamma\left\| \phi\right\|.
	\end{eqnarray*}
	Hence $\left\| \phi-U\phi\right\|\leq \gamma\left\| \phi\right\|.$ It follows that $\left|\left\|\phi \right\| -\left\| U\phi\right\| \right| \leq \left\| \phi-U\phi\right\| \leq\gamma\left\| \phi\right\| $ and $\left\| U\phi\right\| \geq(1-\gamma)\left\| \phi\right\|.$
	We have
	\begin{eqnarray*}
		\left\|\sum_{i=1}^{n}\int_{\mathbb{H}} \xi^i_q c_{i} d\mu(q)\right\|&=&\left\|U\left(\sum_{i=1}^{n}\int_{\mathbb{H}} \eta^i_q c_{i} d\mu(q) \right)  \right\| \\
		&\geq&(1-\gamma)\left\| \sum_{i=1}^{n}\int_{\mathbb{H}} \eta^i_q c_{i} d\mu(q)\right\| \\
		&\geq&(1-\gamma)\sqrt{m(A)}\left( \sum_{i=1}^{n}\left| c_{i}\right|^{2}\right)^{\frac{1}{2}}. 
	\end{eqnarray*}
\end{proof}
\section{acknowledgment}
K. Thirulogasanthar would like to thank the, FRQNT, Fonds de la Recherche  Nature et  Technologies (Quebec, Canada) for partial financial support under the grant number 2017-CO-201915. Part of this work was done while he was visiting the University of Jaffna, Sri Lanka. He expresses his thanks for the hospitality.

\end{document}